\documentclass[12pt,reqno]{amsart}
\usepackage[a4paper,margin=30mm]{geometry}
\usepackage[T1]{fontenc}
\usepackage{lmodern,microtype,mathtools,amssymb,mathrsfs,xcolor}
\usepackage[hidelinks]{hyperref}
\numberwithin{equation}{section}
\newtheorem{theorem}{Theorem}[section]
\newtheorem{proposition}[theorem]{Proposition}
\newtheorem{lemma}[theorem]{Lemma}
\newtheorem{corollary}[theorem]{Corollary}
\theoremstyle{remark}
\newtheorem{remark}[theorem]{Remark}
\DeclareMathOperator{\Ric}{Ric}
\DeclareMathOperator{\Rm}{Rm}
\DeclareMathOperator{\Vol}{Vol}
\DeclareMathOperator{\inj}{inj}
\DeclareMathOperator{\conj}{conj}
\DeclareMathOperator{\Spec}{Spec}
\DeclareMathOperator{\Tr}{Tr}
\newcommand{\dV}{\,dV_g}
\newcommand{\1}{\mathbf 1}
\title[Short geodesics on isospectral three-manifolds]{Shortest closed geodesics on three-dimensional isospectral manifolds}
\author[Y. Li, Y. Wu, J. Zhou]{Yuxiang Li, Yunqing Wu, Jie Zhou}
\address{
	The Departmental of Mathematical Sicences\\
	Tsinghua University, Beijing, People's Republic of China}
\email{liyuxiang@tsinghua.edu.cn}
\address{
	The Institute of Geometry and Physics\\
	University of Science and Technology of China, Hefei, Anhui, People's Republic of China}
\email{yqwu19@ustc.edu.cn}
\address{
	School of Mathematical Sciences \\
	Capital Normal University, Beijing, People's Republic of China}
\email{zhoujiemath@cnu.edu.cn}
\date{}
\subjclass[2020]{58J53, 53C20, 58J45}
\keywords{Isospectral manifolds, closed geodesics, heat invariants, wave trace, compactness}
\hypersetup{pdftitle={Shortest closed geodesics on three-dimensional isospectral manifolds},pdfauthor={Yuxiang Li, Yunqing Wu, Jie Zhou}}

\begin{document}
\begin{abstract}
We prove that the spectrum of a smooth closed three-manifold
gives a positive lower bound for the length of its shortest nonconstant closed geodesic. The argument uses
three curvature heat coefficients and the first positive singularity of the retarded wave trace. The heat coefficients yield a curvature--length dichotomy: sufficiently large maximum curvature forces a closed geodesic shorter than the curvature scale.  We then show that a shortest closed geodesic
lying below the  conjugate radius produces a singularity of the
sine trace.  Combining these estimates gives a lower bound depending only
on the common spectrum. The compactness theorem of Anderson then
implies smooth compactness of the full isospectral family modulo
diffeomorphisms.
\end{abstract}
\maketitle

\section{Introduction}

Let $(M,g)$ be a smooth, closed, connected Riemannian manifold.  We use
the nonnegative Laplace--Beltrami operator on functions,
$\Delta_g=-\operatorname{div}_g\nabla$, and write
\[
 0=\lambda_0(g)<\lambda_1(g)\le \lambda_2(g)\le\cdots
\]
for its spectrum, repeated according to multiplicity.  The question of
how much geometry is determined by this sequence goes back at least to
Kac's formulation of the inverse spectral problem \cite{Kac1966}.
Uniqueness fails in considerable generality.  Milnor constructed
isospectral nonisometric flat tori \cite{Milnor1964}; Vign\'eras found
closed hyperbolic examples \cite{Vigneras1980}; and Sunada's method
produced a general covering construction \cite{Sunada1985}.  There are
also continuous families of isospectral metrics, including conformally
equivalent examples \cite{BrooksGordon1990} and examples on simply connected manifolds \cite{Schueth1999}.  Such examples rule out spectral rigidity,
but they do not by themselves rule out compactness of the set of all
metrics with one fixed spectrum.

The compactness problem has a different history in low dimensions.  For
closed surfaces, Wolpert's analysis of spectral degeneration
\cite{Wolpert1987} and the determinant and uniformization method of
Osgood, Phillips and Sarnak \cite{OPS1988} gave the foundational
compactness results.  In dimension three, Brooks, Perry and Yang first
proved compactness in a conformal class containing a metric of negative
constant scalar curvature \cite{BPY1989}; Chang and Yang subsequently
treated general conformal classes \cite{ChangYang1989,ChangYang1990}.
For unrestricted isospectral families, Anderson showed that a uniform
positive lower bound for the length of the shortest closed geodesic
implies smooth compactness modulo diffeomorphisms
\cite{Anderson1991}.  Anderson's compactness theorem can be stated as follows. 
\begin{theorem}[Anderson~\cite{Anderson1991}] 
The space of compact isospectral 3-manifolds $(M,g)$ for which the 
length $\ell_g$ of the shortest closed geodesic is bounded below, 
is compact in the $C^\infty$ topology modulo diffeomorphisms, and contains only finitely many diffeomorphism type.
\end{theorem}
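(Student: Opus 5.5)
The plan is to follow Anderson's strategy: extract uniform geometric bounds from the fixed spectrum, use the lower bound on the length of closed geodesics to exclude collapse, and then upgrade to smooth convergence with the heat invariants.

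\emph{Step 1: a priori bounds from the spectrum.} Weyl's law fixes $\Vol(M,g)$. The heat invariants $a_0,a_1,a_2,\dots$ are common to the family. In dimension three, $a_2$ is a combination of $\int|\Rm|^2$, $\int|\Ric|^2$ and $\int R^2$. Since the Weyl tensor vanishes, $|\Rm|^2$ is controlled by $|\Ric|^2$ and $R^2$, so $a_2$ controls a positive-definite quadratic form in the curvature and bounds $\int|\Rm|^2\dV$. The higher invariants $a_k$ have the form $\int(c_k|\nabla^{k-2}\Rm|^2+\text{lower order})$. By induction, using Sobolev inequalities (which require a volume noncollapsing / Sobolev constant bound) and interpolation, they bound $\int|\nabla^j\Rm|^2$ for every $j$. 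From $\lambda_1$ and the volume one also gets a diameter bound, via the heat kernel or Cheeger-type arguments together with noncollapsing.

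\emph{Step 2: noncollapsing and the main obstacle.} The crux is a uniform pointwise curvature bound $|\Rm|\le\Lambda$ together with an injectivity radius bound $\inj\ge i_0$. The $L^2$ bounds from Step 1 are not enough by themselves. I would argue by contradiction and blow up: rescale at points of nearly maximal curvature so that $|\Rm|\le 1$ with $|\Rm|=1$ at the base point. The $L^2$ bounds on $\nabla^j\Rm$ are scale-subcritical in dimension three for $j\ge1$, so the rescaled limit has $\nabla\Rm\equiv0$ and curvature bounded with norm one.

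Two cases arise.
\begin{itemize}
\item If the rescaled injectivity radius stays bounded below, the limit is a complete flat-volume-growth locally symmetric space carrying nonzero curvature in $L^2$. This contradicts the scaling behaviour of $\int|\Rm|^2$, which must tend to zero on the rescaled balls.
\item If it collapses, Cheeger--Gromov theory gives short geodesic loops at the curvature scale. In the unscaled metric these become short closed geodesics, of length $\ll\Lambda^{-1/2}\to0$, contradicting $\ell_g\ge\ell_0$.
\end{itemize}
Once $|\Rm|$ is bounded, Klingenberg's lemma gives $\inj\ge\min(\pi/\sqrt\Lambda,\ell_g/2)$, and this is where the hypothesis enters decisively. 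I expect the careful handling of the collapsed case (producing actual closed geodesics, not merely loops, below the conjugate radius) to be the hardest step.

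\emph{Step 3: compactness.} With bounded diameter, bounded $|\nabla^j\Rm|$ for all $j$ (upgraded from $L^2$ to $C^0$ by Sobolev embedding on harmonic-coordinate balls of uniform size) and $\inj\ge i_0$, the Cheeger--Gromov compactness theorem gives $C^\infty$ subconvergence modulo diffeomorphisms. Cheeger finiteness gives finitely many diffeomorphism types. The limit has the same spectrum by continuity of eigenvalues under $C^\infty$ convergence, so the family is closed and hence compact.
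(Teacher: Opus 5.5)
The paper does not prove this statement; it quotes it from Anderson, in Perry's formulation, and relies on the Brooks--Perry--Petersen bootstrap for the smooth part. What it does carry out, in Theorem~\ref{thm:curvature}, is the curvature bound at the heart of your Step~2. Measured against that argument, your proposal has a genuine gap.

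\textbf{Circularity between Steps 1 and 2.} In Step~1 you assert uniform bounds on $\|\nabla^j\Rm\|_2$ for every $j$, using Sobolev inequalities that, as you say yourself, require noncollapsing. In Step~2 you feed these bounds into the blow-up whose purpose is to establish noncollapsing. In the unscaled metric, $a_3$ and $a_4$ contain $\int|\Rm|^3$, $\int|\Rm|^4$ and $\int|\Rm|\,|\nabla\Rm|^2$. Nothing available before Step~2 controls these, so at that stage you have no uniform bound even on $\|\nabla\Rm\|_2$.

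The missing idea is to apply the heat invariants only \emph{after} rescaling by $K=\|\Rm_g\|_\infty$. For $h=Kg$ one has $|\Rm_h|\le1$, so the nonlinear terms are dominated by lower quadratic energies: $\int|\Rm_h|^3\le\int|\Rm_h|^2$ and $\int|\Rm_h|\,|\nabla\Rm_h|^2\le\int|\nabla\Rm_h|^2$. Lemma~\ref{lem:heat} together with \eqref{eq:scaling} then gives
\[
 \|\Rm_h\|_2^2+\|\nabla\Rm_h\|_2^2+\|\nabla^2\Rm_h\|_2^2
 \le C\bigl(a_2(g)K^{-1/2}+|a_3(g)|K^{-3/2}+|a_4(g)|K^{-5/2}\bigr),
\]
with no Sobolev input at all. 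Note that $j=0$ is subcritical as well, so any limit would be flat, not merely locally symmetric.

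The three invariants $a_2,a_3,a_4$ suffice because in dimension three $W^{2,2}$ controls $\|\Rm\|_\infty$ on balls of fixed size. Smallness in $W^{1,2}$ alone is compatible with $\|\Rm_h\|_\infty=1$, since curvature can concentrate on small balls. So $a_4$ is what keeps your normalization $|\Rm_h|=1$ from being lost. Bounds on $\nabla^j\Rm$ for larger $j$ can only be bootstrapped afterwards, once curvature and injectivity radius are under control.

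\textbf{The collapsed case does not arise.} In the rescaled metric, $|\sec_h|\le1$ gives $\conj(h)\ge\pi$ by Rauch comparison, and $\ell(h)=\sqrt K\,\ell(g)\ge\sqrt K\,\ell_0$. Klingenberg's identity \eqref{eq:klingenberg} then gives $\inj(h)\ge\min\{\pi,\sqrt K\,\ell_0/2\}$ directly. You need neither Cheeger--Gromov collapsing theory nor a passage from loops to closed geodesics: Klingenberg already supplies a genuine closed geodesic of length $2\inj$ whenever $\inj<\conj$.

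With this radius fixed, the local Sobolev--Morrey bound of Lemma~\ref{lem:local-morrey} gives
\[
 1=\|\Rm_h\|_\infty\le C\bigl(a_2(g)K^{-1/2}+|a_3(g)|K^{-3/2}+|a_4(g)|K^{-5/2}\bigr)^{1/2}.
\]
This is impossible for large $K$, and no limit needs to be taken. It is exactly the dichotomy \eqref{eq:dichotomy}, and it yields $K\le\max\{Q,\ell_0^{-2}\}$. After that, your Step~3 is the standard conclusion.

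One small point in Step~3: continuity of eigenvalues shows only that a limit is isospectral, not that it still satisfies $\ell\ge\ell_0$. What you actually obtain is sequential precompactness with isospectral limits.
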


Further conditional compactness theorems were
obtained by Brooks, Perry and Petersen \cite{BPP1992,BPP1994}, Gursky
\cite{Gursky1993}, and Zhou \cite{Zhou1997}.  These results reduce the
unrestricted three-dimensional problem to the possible collapse of the
shortest closed geodesic.

Dimension four exhibits additional concentration phenomena.  Xu proved
compactness results for conformal isospectral classes under curvature
hypotheses \cite{Xu1995a,Xu1995b}, and Chen and Xu developed a more
systematic four-dimensional theory using spectral curvature identities
and conformal analysis \cite{ChenXu1996}.  Later work includes Gursky's
compactness theorem under integral curvature assumptions
\cite{Gursky1993}, the positive-Yamabe and small-Weyl theorem of Liu and
Wang \cite{LiuWang2019}, and Xu's analysis of weak compactness and
concentration for conformal isospectral metrics \cite{KeXu2019}.  These
four-dimensional theorems require conformal or curvature assumptions;
they also indicate why the three-dimensional argument below is tied to
the special form of the low-dimensional heat invariants.

We now restrict to dimension three.  Let $\ell(g)$ denote the shortest
length of a nonconstant smooth closed geodesic.  This includes
contractible closed geodesics and is therefore different from the
homotopy systole.  We write $\inj(g)$ and $\conj(g)$ for the
injectivity and conjugate radii, respectively.  The underlying manifold
is allowed to vary.  Our main result removes the remaining length
hypothesis from Anderson's theorem.

\begin{theorem}\label{thm:main}
Let $\Lambda$ be the spectrum of a smooth closed connected
three-manifold.  There exists $\varepsilon_\Lambda>0$ such that every
smooth closed connected three-manifold $(M,g)$ with
$\Spec(\Delta_g)=\Lambda$ satisfies
\[
                         \ell(g)\ge\varepsilon_\Lambda.
\]
Consequently, the class of all such manifolds is compact in the
$C^\infty$ topology,and contains only finitely many diffeomorphism type.
\end{theorem}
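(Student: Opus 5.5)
The plan follows the abstract: first a curvature--length dichotomy from the heat invariants, then a wave-trace argument that controls the length once it sits below the conjugate radius.

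\emph{Step 1 (spectral data).} The spectrum fixes the heat coefficients $a_0=\Vol(M,g)$, $a_1=\frac16\int R\dV$ and
\[
a_2=\frac1{360}\int\bigl(2|\Rm|^2-2|\Ric|^2+5R^2\bigr)\dV .
\]
It also fixes the third coefficient $a_3$, a cubic curvature integral plus terms in $|\nabla\Rm|^2$. In dimension three the Weyl tensor vanishes, so $|\Rm|^2=4|\Ric|^2-R^2$. Hence $a_2$ is a positive definite quadratic form in $\Ric$, and it bounds $\|\Ric\|_{L^2}$. Using the known sign structure of $a_3$ in dimension three, I expect $a_3$ to bound $\int|\nabla\Ric|^2$ modulo $\int|\Ric|^3$. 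The spectrum also fixes the singular support of the wave trace $\sum_j\cos(t\sqrt{\lambda_j})$. Its first positive singularity $T_\Lambda>0$ is therefore a spectral invariant, provided it is finite, which needs to be checked.

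\emph{Step 2 (dichotomy).} Let $K=\max|\Rm|$ and rescale so that $K=1$. I will use a blow-up or contradiction argument based on Cheeger--Gromov--Anderson $\varepsilon$-regularity. If $K$ is large while no closed geodesic of length $\ll K^{-1/2}$ exists, then the injectivity radius at scale $K^{-1/2}$ is bounded below, by Cheeger's lemma in the local form. The rescaled pointed manifolds then converge to a limit with $|\Rm(p)|=1$. The scale-invariant quantity $\int_{B(K^{-1/2})}|\Rm|^{3/2}$ must then be bounded below. This contradicts the smallness obtained from Step 1, namely that the $L^2$ and $H^1$-type bounds force $\int_B|\Rm|^{3/2}\to0$ on balls of shrinking radius. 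The conclusion is: either $K\le C_\Lambda$, or there is a closed geodesic $\gamma$ with $L(\gamma)\le cK^{-1/2}$ and $c$ small. This step is where the three-dimensional identity is essential.

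\emph{Step 3 (wave trace).} Let $\gamma$ be a shortest closed geodesic with $L=\ell(g)<\conj(g)$. I will show that $L$ lies in the singular support of the sine trace $\sum_j \sin(t\sqrt{\lambda_j})/\sqrt{\lambda_j}$, that is, of the retarded wave trace. The tool is the Duistermaat--Guillemin parametrix. Below the conjugate radius there are no conjugate points, so the fixed-point set of the geodesic flow at time $L$ is clean. The leading singular coefficient is a sum over the components of length $L$, weighted by Maslov factors and $|\det(I-P)|^{-1/2}$-type densities. Because $L$ is minimal, these terms should all carry the same sign, so no cancellation occurs. Then $L\ge T_\Lambda$. \textbf{This is the main obstacle}: excluding cancellation among several geodesics of equal length, possibly degenerate, or proving the singularity by an independent positivity argument, for instance using the retarded kernel together with finite propagation speed.

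\emph{Step 4 (conclusion).} Combine the two alternatives. If $K\le C_\Lambda$, then $\conj(g)\ge\pi/\sqrt{C_\Lambda}$. Then either $\ell\ge\pi/\sqrt{C_\Lambda}$, or $\ell<\conj(g)$, in which case Step 3 gives $\ell\ge T_\Lambda$. If $K$ is large, Step 2 gives a geodesic with $\ell\le cK^{-1/2}<\pi K^{-1/2}\le\conj(g)$, so Step 3 again yields $\ell\ge T_\Lambda$. In every case
\[
\ell(g)\ge\varepsilon_\Lambda:=\min\bigl(T_\Lambda,\pi/\sqrt{C_\Lambda}\bigr).
\]
Anderson's theorem then gives $C^\infty$ compactness modulo diffeomorphisms and finiteness of diffeomorphism types.
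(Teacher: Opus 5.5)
Your overall architecture is the paper's: heat invariants force $\ell<\conj$ once $\ell$ is small, the trace has a first-return singularity at $\ell$, and an initial regularity interval of the trace is fixed by one realization. However, two steps fail as written.

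\textbf{Step 3 (the decisive gap).} The claim that absence of conjugate points makes the fixed-point set of the geodesic flow at time $L$ clean is false. Cleanness is a condition on periodic Jacobi fields, i.e.\ on the eigenvalue $1$ of the linearized Poincar\'e map $P$, not on conjugate points. A shortest closed geodesic below $\conj(g)$ may therefore be degenerate and isolated. Then $|\det(I-P)|^{-1/2}$ is undefined and the Duistermaat--Guillemin expansion is unavailable. Even for clean components, minimality of $L$ does not by itself fix the Maslov phases, so ``all terms carry the same sign'' is not an argument.

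The positivity argument you name as a fallback is exactly the missing idea, and it must be run over geodesic loops, not closed geodesics:
\begin{itemize}
\item The paper builds the Hadamard--Riesz parametrix of the \emph{sine} kernel on the pullback balls $D_x(r_*)$, $r_*<\conj(g)$. Its direct term $\frac{U_x(v)}{4\pi|v|}\delta(t-|v|)$ has $U=j^{-1/2}>0$.
\item The diagonal trace sums over all loops $\exp_xv=x$. Since $d\exp_x$ is invertible, these form smooth branches $v_\alpha(x)$ with length functions $L_\alpha$. The direct part of the trace is then the positive measure $\mu=\sum_\alpha(L_\alpha)_*(a_\alpha\,dV_g)$, so no cancellation can occur.
\item Since $\ell<\conj(g)$, Klingenberg's identity gives $\inj(g)=\ell/2$. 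Hence no nonzero loop is shorter than $\ell$, and $\mu$ lives on $[\ell,\infty)$.
\item At a base point of a shortest closed geodesic, $L_{\alpha_0}$ attains its minimum $\ell$. An \emph{upper} Hessian bound alone then gives $\mu([\ell,\ell+u])\ge cu^{3/2}$.
\item The Hadamard tails are bounded by $C\mu([\ell,t])$, and the remainder trace is $C^1$.
\end{itemize}
If the trace were $C^1$ near $\ell$, subtracting the smooth parts would leave a $C^1$ function vanishing left of $\ell$. Its integral over $[\ell,\ell+u]$ is then $O(u^2)$, contradicting the $u^{3/2}$ lower bound. No nondegeneracy or cleanness is used.

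\textbf{Step 2 (also a gap).} After rescaling $h=Kg$ you have $|\Rm_h|\le1$ and an injectivity radius bound. These give only $C^{1,\alpha}$ convergence, under which curvature is not continuous, and there is no $\varepsilon$-regularity for general metrics. So neither ``$|\Rm(p)|=1$ in the limit'' nor a lower bound for $\int_B|\Rm_h|^{3/2}$ follows.

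Your data give $\|\Rm_h\|_2^2\lesssim a_2K^{-1/2}$ and $\|\nabla\Rm_h\|_2^2\lesssim|a_3|K^{-3/2}+a_2K^{-1/2}$. But in dimension three $W^{1,2}$ does not embed in $L^\infty$. A curvature bump of height $1$ and radius $\delta$ has $\|\nabla\Rm_h\|_2^2\sim\delta$, though $\|\nabla^2\Rm_h\|_2^2\sim\delta^{-1}$. So $a_2,a_3$ do not exclude concentration of the maximum far below the scale $K^{-1/2}$.

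You need $a_4$ as well. It controls $\|\nabla^2\Rm\|_2^2$ modulo $\int|\Rm|^4+\int|\Rm|\,|\nabla\Rm|^2$, which after rescaling are dominated by the two previous energies. The local $W^{2,2}\to L^\infty$ estimate on balls of radius comparable to $\inj_h$ then gives
$1\le C_*\bigl(a_2K^{-1/2}+|a_3|K^{-3/2}+|a_4|K^{-5/2}\bigr)$
directly, with no limit needed.

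\textbf{Minor.} What must be checked about $T_\Lambda$ is positivity, not finiteness. Positivity follows from Chazarain's inclusion applied to one fixed realization.
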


The consequence follows from Anderson's compactness theorem
\cite{Anderson1991}, 
\cite[Theorem~3.5]{Perry2003}; the heat-invariant bootstrap of Brooks,
Perry and Petersen \cite{BPP1994} supplies the corresponding curvature
control.  The new point is the spectral lower bound for $\ell(g)$.

The wave trace is the natural spectral object for this purpose.  The
connection between singularities of wave traces and lengths of closed
geodesics was established by Chazarain \cite{Chazarain1974} and
Duistermaat and Guillemin \cite{DuistermaatGuillemin1975}. 
For the classical wave trace $$\mathcal{W}(t)=\frac{1}{2}\sum_{j\ge 0}e^{\pm i\sqrt{\lambda _j}t},$$ Chazarain\cite{Chazarain1974} proved
\begin{align}\label{eq:motivation}
\mathrm{singsupp}\,\mathcal{W}\subset \mathcal{L}\cup \{0\},
\end{align}
there $\mathcal{L}$ is the set of lengths (and their opposites) of the closed geodesics of $M$. Noting $l(g)=\inf \mathcal{L}$ and  $\mathcal{W}(t)$( and its singular support) only depends on the spectrum.  \eqref{eq:motivation}  strongly indicates $l(g)$ is spectrum invariant. However,  
standard trace formulas are most transparent only when the relevant fixed-point set is clean or the closed orbit is nondegenerate.  A shortest geodesic need satisfy neither hypothesis, and contributions from different return
directions may in principle cancel.  So, the inverse inclusion of \eqref{eq:motivation} may not holds, which means  $\mathcal{W}$ may also be smooth at  $t=l(g)$, and $l(g)$ may not be recognized from the singularity of $\mathcal{W}$.  We avoid these difficulties by constructing and estimating the retarded sine kernel below the conjugate radius precisely: its direct front has a positive coefficient, so its first-return contribution can be detected without a clean-intersection assumption.

For completeness, we specify the operators and traces used in the
argument.  Choose a real orthonormal eigenbasis
$\{\phi_j\}_{j\ge0}$ in $L^2(M,dV_g)$, with
$\phi_0=\Vol_g(M)^{-1/2}$.  The nonnegative self-adjoint square root of
$\Delta_g$ is defined by
\begin{equation}\label{eq:square-root}
 \begin{aligned}
 \sqrt{\Delta_g}\left(\sum_jc_j\phi_j\right)
     &=\sum_j\sqrt{\lambda_j(g)}\,c_j\phi_j,\\
 \mathcal D(\sqrt{\Delta_g})
     &=\left\{\sum_jc_j\phi_j\in L^2(M):
                         \sum_j\lambda_j(g)|c_j|^2<\infty\right\}.
 \end{aligned}
\end{equation}
This definition is independent of the choice of eigenbasis.  Set
\begin{equation}\label{eq:sine-trace}
 S_g(t)=\frac{\sin(t\sqrt{\Delta_g})}{\sqrt{\Delta_g}},
 \qquad \mathcal S_g=\Tr S_g,
\end{equation}
where $S_g(t)$ acts by multiplication by $t$ on the zero eigenspace.
Its distributional kernel is
\begin{equation}\label{eq:kernel-expansion}
 H(t,x,y)=\frac{t}{\Vol_g(M)}
 +\sum_{j\ge1}\frac{\sin(t\sqrt{\lambda_j(g)})}
                         {\sqrt{\lambda_j(g)}}\phi_j(x)\phi_j(y).
\end{equation}
Thus, for $f\in C^\infty(M)$,
\begin{equation}\label{eq:wave-kernel}
 u(t,x)=S_g(t)f(x)=\int_M H(t,x,y)f(y)\,dV_g(y)
\end{equation}
solves
\begin{equation}\label{eq:wave-equation}
 \begin{cases}
 (\partial_t^2+\Delta_g)u=0,&t>0,\\
 u(0,x)=0,\qquad \partial_tu(0,x)=f(x).
 \end{cases}
\end{equation}
Equivalently, $H(0,x,y)=0$ and
$\partial_tH(0,x,y)=\delta_y(x)$, with the Dirac distribution
normalized relative to $dV_g(x)$.  The spectral expansion also shows
that $H(t,x,y)=H(t,y,x)$ and that the wave equation holds in either
spatial variable.  Only positive times are used in the retarded
construction below.

The trace in \eqref{eq:sine-trace} is a distribution in time.  For
$\psi\in C_c^\infty(\mathbb R)$, set
\begin{equation}\label{eq:spectral-trace-definition}
 \langle\mathcal S_g,\psi\rangle
 =\int_{\mathbb R}t\psi(t)\,dt
 +\sum_{j\ge1}\int_{\mathbb R}\psi(t)
       \frac{\sin(t\sqrt{\lambda_j(g)})}{\sqrt{\lambda_j(g)}}\,dt.
\end{equation}
Repeated integration by parts and Weyl's law give absolute convergence
of this series.  Equivalently, $\langle\mathcal S_g,\psi\rangle$ is the
trace of the smoothing operator $\int\psi(t)S_g(t)\,dt$.  The notation
$\mathcal S_g(t)=\int_M H(t,x,x)\,dV_g(x)$ is understood in this
time-regularized sense; we do not assert that $S_g(t)$ is trace class at
a fixed time.  In particular, $\mathcal S_g$ depends only on the
spectrum.  Within an isospectral class we therefore denote the
common distribution by $\mathcal S_\Lambda$.

The proof has three steps.  First, the heat coefficients
$a_2,a_3,a_4$ give a curvature--length dichotomy.  After normalizing the
maximum curvature, local Sobolev--Morrey estimates show that if
$\ell(g)$ is sufficiently small in terms of these spectral
coefficients, then
\[
                            \ell(g)<\conj(g).
\]
This is the geometric condition needed later: the exponential map is
nonsingular at every return vector with length sufficiently close to
$\ell(g)$.

Second, below the conjugate radius a finite retarded parametrix has a
positive direct front.  Its trace is a positive measure plus a causal
tail and a differentiable remainder.  Near a shortest return, the
cumulative positive mass is at least $cu^{3/2}$ in a time interval of
width $u$, whereas the tail is bounded by $Cu$ times that mass.  If the
trace were $C^1$, subtracting the smooth and differentiable terms would
give an onset integral of order $O(u^2)$, a contradiction.  Hence
$\mathcal S_g$ is not $C^1$ at $\ell(g)$, without any nondegeneracy or
clean-intersection assumption.  A related use of positive principal
amplitudes without clean hypotheses appears for deformations of
Liouville tori in \cite[Theorem~4.13 and Section~4.5]{HKLV2025}.

Third, the same local construction on one fixed realization of
$\Lambda$, inside its injectivity radius, proves that
$\mathcal S_\Lambda$ is $C^1$ on an interval
$(0,\delta_\Lambda)$.  Because this distribution is fixed by the
spectrum, the interval is common to every realization; no uniform
injectivity-radius estimate for the other metrics is used.  If
$\ell(g)<\delta_\Lambda$ and the first step gives
$\ell(g)<\conj(g)$, the second and third steps contradict one another.

Section~\ref{sec:curvature} derives the heat-invariant estimates and the
inequality $\ell<\conj$.  Section~\ref{sec:first-return} constructs the
retarded parametrix and proves the first-return singularity.  Section~
\ref{sec:conclusion} proves the common $C^1$ interval and completes the
proof of Theorem~\ref{thm:main}.

\vspace{2ex}
\noindent\textbf{AI assistance.} This work was developed with substantial assistance from 
 ChatGPT GPT-6 Pro (powered by GPT-6 Astra). In particular, the idea of using the sine propagator \[ S_g(t)=\frac{\sin(t\sqrt{\Delta_g})}{\sqrt{\Delta_g}} \] and the regularity of its trace to derive a contradiction from the existence of sufficiently short closed geodesics was suggested by ChatGPT. ChatGPT also assisted in implementing this idea, including the construction and analysis of the wave-kernel parametrix and the proof of the resulting trace singularity. The author has carefully checked the mathematical arguments, computations in the final manuscript and takes full responsibility for the correctness and content of the paper.

\section{Heat coefficients and the inequality
\texorpdfstring{$\ell<\conj$}{ell < conj}}
\label{sec:curvature}

Set
\begin{equation}\label{eq:heat-convention}
 H_g(t):=\Tr e^{-t\Delta_g}
       \sim(4\pi t)^{-3/2}\sum_{m=0}^{\infty}a_m(g)t^m,
       \qquad t\to 0.
\end{equation}
The conclusion $\ell<\conj$ below will be used in
Section~\ref{sec:first-return}; it ensures that the first return
can be analyzed before the exponential map develops conjugate
points. The present section obtains this condition from spectral
heat data whenever $\ell$ is sufficiently small.

\begin{theorem}\label{thm:curvature}
There is a universal $C_*>0$ such that, if
\begin{equation}\label{eq:Q-def}
 Q(g)=\max\left\{(3C_*a_2(g))^2,
       (3C_*|a_3(g)|)^{2/3},(3C_*|a_4(g)|)^{2/5}\right\}
\end{equation}
and $K(g)=\|\Rm_g\|_{L^\infty}$, then
\begin{equation}\label{eq:dichotomy}
                 \ell(g)\sqrt{K(g)}<1
                 \quad\text{or}\quad K(g)\le Q(g).
\end{equation}
Consequently, $\ell(g)<Q(g)^{-1/2}$ implies
$\ell(g)<\conj(g)$.
\end{theorem}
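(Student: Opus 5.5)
We argue by scaling and contradiction. Suppose both alternatives in \eqref{eq:dichotomy} fail, so $\ell(g)\sqrt{K(g)}\ge 1$ and $K(g)>Q(g)$. Replace $g$ by $\tilde g=K(g)\,g$. Then $\|\Rm_{\tilde g}\|_{L^\infty}=1$ and $\ell(\tilde g)=\sqrt{K}\,\ell(g)\ge1$. The heat coefficients scale as $a_m(\tilde g)=K^{3/2-m}a_m(g)$, so
\[
a_2(\tilde g)=K^{-1/2}a_2,\qquad a_3(\tilde g)=K^{-3/2}a_3,\qquad a_4(\tilde g)=K^{-5/2}a_4 .
\]
The goal is a universal inequality for any metric with $\|\Rm\|_{L^\infty}=1$ and $\ell\ge1$:
\begin{equation*}
 1\le C_*\bigl(a_2(\tilde g)+|a_3(\tilde g)|+|a_4(\tilde g)|\bigr).
\end{equation*}
Given this, one of the three terms is at least $1/3$. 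That yields $K\le(3C_*a_2)^2$, or $K\le(3C_*|a_3|)^{2/3}$, or $K\le(3C_*|a_4|)^{2/5}$, i.e.\ $K\le Q$, a contradiction.

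To get this inequality, first control the geometry at unit scale. Klingenberg's lemma gives $\inj(\tilde g)\ge\min\{\pi,\ell(\tilde g)/2\}\ge1/2$, and $|\Rm_{\tilde g}|\le1$. Hence each ball $B(x,1/4)$ admits coordinates (normal or harmonic) with uniformly controlled metric coefficients, and the Sobolev constants there are universal. Pick $x_0$ with $|\Rm|(x_0)=1$. In dimension three $W^{2,2}\hookrightarrow C^0$, and applying this to $|\Rm|$ (or to the components of $\Rm$, via Kato) on $B(x_0,1/4)$ gives
\[
1\le C\int_M\bigl(|\Rm|^2+|\nabla\Rm|^2+|\nabla^2\Rm|^2\bigr)\,dV_{\tilde g}.
\]

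Next express the right-hand side through heat invariants. In dimension three the Weyl tensor vanishes, so $\Rm$ is algebraically determined by $\Ric$. The known formula $a_2=\frac1{360}\int(2|\Rm|^2-2|\Ric|^2+5R^2)$ then becomes a positive definite quadratic form in $\Ric$, so $\int|\Rm|^2\le C a_2$. Gilkey's formulas give $a_3$ and $a_4$ as $\int(Q_3(\nabla\Rm)+\text{cubic in }\Rm)$ and $\int(Q_4(\nabla^2\Rm)+\text{terms }\Rm*\nabla\Rm*\nabla\Rm,\ \Rm*\Rm*\nabla^2\Rm,\ \Rm^4)$ after integration by parts. Using $|\Rm|\le1$, the lower-order terms are bounded by $\int(|\Rm|^2+|\nabla\Rm|^2+\epsilon|\nabla^2\Rm|^2)$. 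This yields the bootstrap
\[
\int|\nabla\Rm|^2\le C(|a_3|+a_2),\qquad \int|\nabla^2\Rm|^2\le C(|a_4|+|a_3|+a_2),
\]
which is the Brooks--Perry--Petersen scheme \cite{BPP1994}.

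The main obstacle I expect is coercivity of the leading quadratic parts $Q_3$ and $Q_4$ in dimension three. For example, $Q_3$ is a combination of $|\nabla\Ric|^2$ and $|\nabla R|^2$ with coefficients that are not obviously positive. I would handle this with the contracted Bianchi identity $\operatorname{div}\Ric=\frac12\nabla R$ and the commutation formula, which give
\[
\int|\nabla\Ric|^2=\int|\operatorname{curl}\text{-part}|^2+\tfrac14\int|\nabla R|^2+\int\Rm*\Ric*\Ric .
\]
The cubic error is again absorbed by $\int|\Rm|^2$. The same manipulation should work one derivative higher for $a_4$.

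Finally, the consequence. Assume $\ell(g)<Q^{-1/2}$. If $K\le Q$, then $\ell<Q^{-1/2}\le K^{-1/2}<\pi/\sqrt K$. Otherwise the dichotomy gives $\ell\sqrt K<1$, so again $\ell<\pi/\sqrt K$. In both cases Rauch comparison, which gives $\conj(g)\ge\pi/\sqrt K$, yields $\ell(g)<\conj(g)$.
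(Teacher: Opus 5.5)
Your proposal is correct and follows essentially the same route as the paper. Both arguments rescale to $\|\Rm\|_{L^\infty}=1$, obtain $\inj\ge 1/2$ from Rauch and Klingenberg, bound $\|\Rm\|_\infty$ by the $L^2$ norms of $\Rm,\nabla\Rm,\nabla^2\Rm$ through a local Sobolev--Morrey estimate, and control those norms by $a_2,a_3,a_4$ via the Brooks--Perry--Petersen bootstrap. Two technical remarks: the coercivity you flag as the main obstacle is not actually one, because the Branson--Gilkey--\O rsted leading terms quoted in the paper, $-\frac{1}{1680}\int(5|\nabla R|^2+2|\nabla\Ric|^2)$ and $\frac{1}{30240}\int(11|\nabla^2R|^2+2|\nabla^2\Ric|^2)$, are already definite; and for the sup bound you should not apply $W^{2,2}\hookrightarrow C^0$ directly to $|\Rm|$, since Kato controls only its first derivative, so the paper instead uses $W^{1,2}\hookrightarrow L^6$ on $|\Rm|$ and $|\nabla\Rm|$ followed by $W^{1,6}\hookrightarrow C^0$ on $|\Rm|$.
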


We will use the following:

\begin{lemma}\label{lem:heat}
Every smooth closed three-manifold satisfies
\begin{align}
 \|\Rm\|_2^2&\le240a_2,\label{eq:heat-Y}\\
 \|\nabla\Rm\|_2^2
  &\le C\left(|a_3|+\int_M|\Rm|^3\dV\right),\label{eq:heat-X}\\
 \|\nabla^2\Rm\|_2^2
  &\le C\left(|a_4|+\int_M|\Rm|^4\dV
            +\int_M|\Rm|\,|\nabla\Rm|^2\dV\right).
                                                    \label{eq:heat-Z}
\end{align}
\end{lemma}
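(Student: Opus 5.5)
The approach is to use the explicit formulas for the heat invariants $a_2,a_3,a_4$ (Gilkey's formulas) specialized to dimension three. In dimension three the Weyl tensor vanishes, so every curvature quantity is expressed through $\Ric$ and the scalar curvature $R$. With the normalization \eqref{eq:heat-convention},
\[
 a_2=\frac1{360}\int_M\bigl(2|\Rm|^2-2|\Ric|^2+5R^2\bigr)\dV .
\]
In dimension three one has $|\Rm|^2=4|\Ric|^2-R^2$. Hence
\[
 2|\Rm|^2-2|\Ric|^2+5R^2=6|\Ric|^2+3R^2 .
\]
This is bounded below by $\tfrac32|\Rm|^2$, because $\tfrac32(4|\Ric|^2-R^2)\le 6|\Ric|^2$. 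Therefore $a_2\ge \frac{1}{240}\|\Rm\|_2^2$, which is \eqref{eq:heat-Y}. I will double-check the constant against the normalization, but the structural point is that the quadratic form is positive definite on three-dimensional curvature tensors.

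For \eqref{eq:heat-X}, I will write $a_3$ as
\[
 a_3=\frac{1}{7!}\int_M\bigl(-18|\nabla R|^2+17|\nabla\Ric|^2-2|\nabla\Rm|^2+\cdots\bigr)\dV ,
\]
after discarding total divergences such as $\Delta^2R$ and $\Delta R^2$. Gilkey's formula gives
\[
 -18|\nabla R|^2+17|\nabla \Ric|^2-2|\nabla\Rm|^2-4\nabla_j R_{ik}\nabla_kR_{ij}\;+\;\text{cubic terms}.
\]
I will integrate by parts in the mixed term $\nabla_jR_{ik}\nabla_kR_{ij}$. Commuting derivatives and using the contracted Bianchi identity $\nabla_kR_{ik}=\tfrac12\nabla_iR$ turn it into $\tfrac14|\nabla R|^2$ plus cubic curvature terms. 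I will also express $|\nabla\Rm|^2$ via $\nabla\Ric$ and $\nabla R$ using $W=0$.

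The result is a quadratic form in $(\nabla\Ric,\nabla R)$, integrated, plus cubic terms bounded by $C\int|\Rm|^3$. The key check is that this quadratic form is coercive, i.e.\ comparable to $|\nabla\Ric|^2$ up to sign, possibly after splitting $\nabla\Ric$ into traceless and trace parts and using the integrated Bianchi identity $\int|\operatorname{div}\Ric|^2=\tfrac14\int|\nabla R|^2$. Definiteness of either sign suffices, since we bound by $|a_3|$. I expect this to be the main obstacle: the signs in Gilkey's $a_3$ are mixed. The coercivity must come from the decomposition of $\nabla\Ric$ into irreducible pieces (the Codazzi/Cotton part, the trace part, and the totally symmetric part), together with the integral identities relating them, rather than from a pointwise inequality.

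For \eqref{eq:heat-Z}, I will argue the same way using the known leading-order part of $a_4$. This part is a quadratic form in $\nabla^2\Rm$ (Gilkey/Amsterdamski–Avramidi), which after integration by parts is a constant multiple of $\int|\nabla^2\Ric|^2$ combined with $\int|\nabla^2R|^2$ or $\int|\Delta\Ric|^2$. The remaining terms are:
\begin{itemize}
\item[(i)] terms of type $\Rm*\nabla\Rm*\nabla\Rm$ and $\Rm*\Rm*\nabla^2\Rm$. The latter is integrated by parts into the former plus $\nabla\Rm$ terms, and these are bounded by $\int|\Rm||\nabla\Rm|^2$.
\item[(ii)] quartic terms, bounded by $\int|\Rm|^4$.
\end{itemize}
Commutators of covariant derivatives, arising when converting between $\int|\Delta\Ric|^2$ and $\int|\nabla^2\Ric|^2$ (Bochner), produce only terms of type (i) and (ii). Coercivity of the top-order form again reduces to a finite-dimensional check on the irreducible components.
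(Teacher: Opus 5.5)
Your route is the same as the paper's: Gilkey's formulas, the identity $|\Rm|^2=4|\Ric|^2-R^2$ and its differentiated forms, and integration by parts that pushes everything else into $\int|\Rm|^3$, $\int|\Rm|^4$, $\int|\Rm|\,|\nabla\Rm|^2$. Your proof of \eqref{eq:heat-Y} is exactly the paper's.

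The gap is in \eqref{eq:heat-X} and \eqref{eq:heat-Z}. Their only real content is that the top-order quadratic form has a definite sign, and you leave this as an ``expected obstacle'' and a ``finite-dimensional check''. Moreover, the $a_3$ integrand you start from is wrong:
\begin{itemize}
\item Terms such as $R\,\Delta R$ are not divergences; they differ from $\tfrac12\Delta(R^2)$ by $\pm|\nabla R|^2$. Gilkey's local formula contains four such terms, namely $28\tau\tau_{;nn}-8\rho_{jk}\rho_{jk;nn}+24\rho_{jk}\rho_{jn;kn}+12R_{ijkl}R_{ijkl;nn}$ in his notation, and discarding them changes the gradient coefficients.
\item The coefficients $17,-2$ that you attach to $|\nabla\Ric|^2,|\nabla\Rm|^2$ are in fact Gilkey's raw coefficients of $|\nabla R|^2,|\nabla\Ric|^2$.
\end{itemize}
Running your own reduction on your integrand gives $\int_M(9|\nabla\Ric|^2-17|\nabla R|^2)\dV$ plus cubic terms. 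This form is indefinite even modulo cubic terms. Take $h=\epsilon\cos(kx_3)H$ on a flat torus. Then $\int|\Rm|^3=O(\epsilon^3)$, and the linearized Ricci symbol $\hat\rho$ ranges over symmetric matrices with $\hat\rho_{13}=\hat\rho_{23}=0$ and $\hat\rho_{33}=\hat\rho_{11}+\hat\rho_{22}$. The choice $\hat\rho=\operatorname{diag}(1,1,2)$ makes the form negative, and $\hat\rho=e_1\otimes e_2+e_2\otimes e_1$ makes it positive. A suitable combination kills the $\epsilon^2$ term while $\|\nabla\Rm\|_2^2\sim\epsilon^2$. So with your coefficients \eqref{eq:heat-X} would be false, and no decomposition of $\nabla\Ric$ into irreducible pieces could rescue the argument.

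The missing input is the correct leading term, which the paper takes from Branson--Gilkey--\O rsted. After all integrations by parts one has, in any dimension,
\[
 a_3=\frac1{7!}\int_M\bigl(-18|\nabla R|^2+6|\nabla\Ric|^2-3|\nabla\Rm|^2\bigr)\dV+\text{cubic}.
\]
By the second Bianchi identity, $\int|\nabla\Rm|^2=4\int|\nabla\Ric|^2-\int|\nabla R|^2+\text{cubic}$, and this holds pointwise in dimension three. Hence $a_3=-\frac1{1680}\int(5|\nabla R|^2+2|\nabla\Ric|^2)\dV+\int P_3\dV$. Likewise $a_4=\frac1{30240}\int(11|\nabla^2R|^2+2|\nabla^2\Ric|^2)\dV+\int P_4\dV$.

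Both forms are pointwise definite, so no irreducible decomposition or extra integral identity is needed. In dimension three $|\nabla^i\Rm|^2=4|\nabla^i\Ric|^2-|\nabla^iR|^2\le4|\nabla^i\Ric|^2$, and $P_3$, $P_4$ are handled as in your items (i)--(ii). Without these explicit coefficients, or an equivalent complete computation, \eqref{eq:heat-X} and \eqref{eq:heat-Z} are not established.
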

\begin{proof}
The three-dimensional curvature identity
$|\Rm|^2=4|\Ric|^2-R^2$ and the usual $a_2$ formula give
\begin{equation*}
                  360a_2=\int_M(3R^2+6|\Ric|^2)\dV.
\end{equation*}
This proves \eqref{eq:heat-Y}. The leading integrated terms of the
next two coefficients are
\begin{align}
 a_3&=-\frac1{1680}\int_M
       (5|\nabla R|^2+2|\nabla\Ric|^2)\dV+\int_M P_3\dV,
                                                       \label{eq:a3}\\
 a_4&=\frac1{30240}\int_M
       (11|\nabla^2R|^2+2|\nabla^2\Ric|^2)\dV+\int_M P_4\dV.
                                                       \label{eq:a4}
\end{align}
These coefficients follow from the leading-term formula of
Branson, Gilkey and \O rsted \cite{BGO1990}; see
\cite[Theorem 3.3(2), p.~291]{Gilkey2007} for the explicit formulation.
In that convention the coefficient indexed by $2m$ equals
$(4\pi)^{-3/2}a_m$ in \eqref{eq:heat-convention}.

The remaining integrated terms have at least three curvature
factors. At weight six they are cubic without derivatives, so
$|P_3|\le C|\Rm|^3$. At weight eight, the possibilities are four
undifferentiated curvature factors or three factors with two
derivatives in total. Integration by parts in the latter terms gives
\begin{equation*}
 \left|\int_M P_4\dV\right|
 \le C\int_M\bigl(|\Rm|^4+|\Rm|\,|\nabla\Rm|^2\bigr)\dV.
\end{equation*}
Commuting derivatives changes only these same remainder types.
This also explains why no additive constant or volume term occurs
in \eqref{eq:heat-X}--\eqref{eq:heat-Z}. The remainder structure is
the one used in \cite[Theorem 4.2 and p.~303]{BPP1994}.

In dimension three the full curvature tensor is an algebraic linear
expression in $\Ric$, $R$, and the metric. Since the metric is
parallel, the same expression holds after covariant differentiation.
The definite quadratic forms in \eqref{eq:a3}--\eqref{eq:a4}
therefore control the corresponding Riemann-tensor energies.
\end{proof}

\begin{lemma}\label{lem:local-morrey}
For every $i_0>0$ there is $C(i_0)$ such that
\begin{equation}\label{eq:local-morrey}
 \|\Rm\|_\infty\le C(i_0)
   \bigl(\|\Rm\|_2+\|\nabla\Rm\|_2+\|\nabla^2\Rm\|_2\bigr)
\end{equation}
whenever $\|\Rm\|_\infty\le1$ and $\inj\ge i_0$ on a smooth
closed three-manifold. The constant does not depend on total volume
or diameter.
\end{lemma}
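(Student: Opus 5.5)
The idea is to work pointwise, on a ball of fixed size, with Morrey's inequality $W^{2,2}\hookrightarrow C^0$ in harmonic coordinates. Fix $x_0\in M$ where $|\Rm|$ attains its maximum. By the hypotheses $\|\Rm\|_\infty\le1$ and $\inj\ge i_0$, the ball $B(x_0,r_0)$ carries harmonic coordinates in which the metric is uniformly controlled: $C^{-1}\delta\le g_{ij}\le C\delta$, with $\|g_{ij}\|_{C^{1,\alpha}}$ or $\|g_{ij}\|_{W^{2,p}}$ bounded. Here $r_0=r_0(i_0)>0$, by the harmonic radius estimates of Jost--Karcher and Anderson. Only the ball around $x_0$ enters, so the constant cannot depend on total volume or diameter.

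On this ball, apply the Euclidean Sobolev--Morrey inequality to the scalar function $f=|\Rm|$. More robustly, one can use $f=(|\Rm|^2+\eta^2)^{1/2}$ and let $\eta\to0$; alternatively, apply the inequality to the components of $\Rm$ in a frame. Since $W^{2,2}(B_{r_0})\hookrightarrow C^0$ in dimension three with a constant depending only on $r_0$ and the metric bounds, we get
\[
 |\Rm|(x_0)\le C\bigl(\|\Rm\|_{L^2(B)}+\|\nabla\Rm\|_{L^2(B)}+\|\nabla^2\Rm\|_{L^2(B)}\bigr).
\]

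The key technical step is converting coordinate derivatives of the components into covariant derivatives. Writing $\partial^2 R_{ijkl}=\nabla^2\Rm+\Gamma*\nabla\Rm+(\partial\Gamma+\Gamma*\Gamma)*\Rm$, we need $\Gamma$ bounded in $L^\infty$ and $\partial\Gamma$ bounded in $L^2$ on the ball. In harmonic coordinates $\Delta g_{ij}=-2\Ric_{ij}+Q(g,\partial g)$ with $|\Ric|\le2$, so elliptic estimates give $g\in W^{2,p}$ for all $p<\infty$. Hence $\Gamma\in C^{\alpha}$ and $\partial\Gamma\in L^p$, and in particular $\partial\Gamma\in L^2(B)$ with a uniform bound. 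The term $(\partial\Gamma)*\Rm$ is then bounded in $L^2$ by $\|\partial\Gamma\|_{L^2}\|\Rm\|_\infty$.

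This produces an additive term $C\|\Rm\|_\infty$ on the right-hand side, and absorbing it is the main obstacle. I would handle it by scaling the ball radius: the $W^{2,2}\to C^0$ inequality on $B_r$ carries weights $r^{-3/2},r^{-1/2},r^{1/2}$ on the three norms. The term $\partial\Gamma*\Rm$ contributes $r^{1/2}\|\partial\Gamma\|_{L^2(B_r)}\|\Rm\|_\infty\le Cr^{1/2}r^{3/p'}\cdots$, which is at most $\tfrac12\|\Rm\|_\infty$ once $r$ is small, depending only on $i_0$.

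Since $|\Rm|(x_0)=\|\Rm\|_\infty$, we can absorb this term into the left-hand side. The global $L^2$ norms dominate the local ones, so the result is \eqref{eq:local-morrey} with $C(i_0)$ depending on $r(i_0)$.
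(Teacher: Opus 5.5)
Your argument is correct once you commit to the component version, but it takes a heavier route than the paper. The paper never differentiates anything twice. Using Kato's inequality $|d|T||\le|\nabla T|$, it applies the scalar embedding $W^{1,2}\hookrightarrow L^6$ to $|\Rm|$ and to $|\nabla\Rm|$ on a ball of fixed radius. It then applies $W^{1,6}\hookrightarrow C^0$ to $f=|\Rm|$, since $|df|\le|\nabla\Rm|\in L^6$. Because only first derivatives of scalar functions appear, bilipschitz control of normal charts suffices, and this comes from Rauch comparison with $|\sec|\le1$ and $\inj\ge i_0$. There are no Christoffel symbols, no harmonic-radius theorem, and nothing to absorb.

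You instead apply $W^{2,2}\hookrightarrow C^0$ to coordinate components. That requires a uniform bound on $\partial\Gamma$ in $L^2$, hence harmonic coordinates with $W^{2,p}$ bounds (Jost--Karcher/Anderson), and then you must deal with $\partial\Gamma*\Rm$. Your absorption does work. After rescaling, that term contributes at most $Cr^{1/2}\|\partial\Gamma\|_{L^2(B_{r_0})}\|\Rm\|_\infty$, with a coefficient independent of $r$. Choosing $r=r(i_0)$ small and evaluating at a maximum point then closes the estimate. You could also avoid absorption by writing $\|\partial\Gamma*\Rm\|_{L^2}\le C\|\partial\Gamma\|_{L^3}\|\Rm\|_{L^6}$ and bounding $\|\Rm\|_{L^6}$ by Sobolev; this is exactly the paper's $L^6$ intermediate step.

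One correction: the first two options you list do not work.
\begin{itemize}
\item The function $|\Rm|$ need not lie in $W^{2,2}$: it has a kink, like $|x_1|$, wherever $\Rm$ vanishes to first order along a surface.
\item The Hessian of $(|\Rm|^2+\eta^2)^{1/2}$ has size about $|\nabla\Rm|^2/\eta$ on $\{|\Rm|\lesssim\eta\}$, so its $L^2$ norm can blow up as $\eta\to0$.
\end{itemize}
The norm $|\Rm|$ is well behaved only at first order, which is precisely why the paper's proof stays first order. The component version, which your key step actually uses, is the valid one.

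What your route buys is uniform second-order control of the components in a fixed chart, which extends directly to higher-order estimates. The paper's route buys economy, since it needs only $C^0$ control of the metric.
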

\begin{proof}
Fix $s>0$ with $4s<\min\{i_0,1/2\}$. Rauch comparison makes every
normal chart of radius $4s$ uniformly bilipschitz to its Euclidean
ball, since $|\sec|\le1$. A scalar Euclidean Sobolev inequality with
a cutoff and Kato's inequality give, for every smooth tensor $T$,
\begin{equation*}
 \|T\|_{L^6(B(p,2s))}
 \le C(i_0)\bigl(\|T\|_{L^2(B(p,4s))}
                  +\|\nabla T\|_{L^2(B(p,4s))}\bigr).
\end{equation*}
Apply this to $T=\Rm$ and $T=\nabla\Rm$. For the scalar function
$f=|\Rm|$, we have $|df|\le|\nabla\Rm|$ almost everywhere.
Euclidean $W^{1,6}$ embedding on an inner coordinate ball now
bounds $f(p)$ by the right hand side of \eqref{eq:local-morrey}.
Taking the supremum over $p$ proves the result.
\end{proof}

We use the  injectivity-radius identity of Klingenberg,
\begin{equation}\label{eq:klingenberg}
            \inj(g)=\min\{\conj(g),\ell(g)/2\}.
\end{equation}

\begin{proof}[Proof of Theorem~\ref{thm:curvature}]
The case $K=0$ is flat and has infinite conjugate radius. Suppose
$K>0$ and $\ell(g)\sqrt K\ge1$, and set $h=Kg$. Then
\[
 \|\Rm_h\|_\infty=1,\qquad |\sec_h|\le1,
       \qquad \ell(h)\ge1.
\]
Rauch comparison and \eqref{eq:klingenberg} imply
\begin{equation}\label{eq:normalized-inj}
 \conj(h)\ge\pi,\qquad
 \inj(h)\ge\min\{\pi,1/2\}=1/2.
\end{equation}
The normalized heat coefficients are
\begin{equation}\label{eq:scaling}
                       a_m(h)=K^{3/2-m}a_m(g).
\end{equation}
Write $Y=\|\Rm_h\|_2^2$, $X=\|\nabla\Rm_h\|_2^2$, and
$Z=\|\nabla^2\Rm_h\|_2^2$. Since $|\Rm_h|\le1$,
Lemma~\ref{lem:heat} gives
\[
 Y\le240a_2(h),\qquad X\le C(|a_3(h)|+Y),\qquad
 Z\le C(|a_4(h)|+Y+X).
\]
Squaring \eqref{eq:local-morrey}, using
\eqref{eq:normalized-inj}, and increasing a universal constant
$C_*$ if necessary, we obtain
\begin{equation}\label{eq:normalized-energy}
 1\le C_*\bigl(a_2(g)K^{-1/2}
       +|a_3(g)|K^{-3/2}+|a_4(g)|K^{-5/2}\bigr).
\end{equation}
If $K>Q(g)$, each of the three terms on the right, including
$C_*$, is less than $1/3$. This contradiction proves
\eqref{eq:dichotomy}.

If $K=0$, the consequent follows from $\conj(g)=+\infty$.
If $K>0$ and $\ell(g)<Q(g)^{-1/2}$, the alternative
$\ell(g)\sqrt K\ge1$ would give
$K\ge\ell(g)^{-2}>Q(g)$, contradicting \eqref{eq:dichotomy}.
Thus $\ell(g)\sqrt K<1$, and Rauch comparison gives
$\conj(g)\ge\pi/\sqrt K>\ell(g)$.
\end{proof}

\begin{corollary}\label{cor:subquadratic}
Suppose $a_2(g_j)+|a_3(g_j)|+|a_4(g_j)|$ is bounded on a sequence
of smooth closed three-manifolds and $\ell(g_j)\to0$. Then
\[
                \ell(g_j)^2\|\Rm_{g_j}\|_\infty\longrightarrow0.
\]
\end{corollary}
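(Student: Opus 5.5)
The plan is to apply the dichotomy \eqref{eq:dichotomy} of Theorem~\ref{thm:curvature} along the sequence, but at a smaller curvature scale. As stated, \eqref{eq:dichotomy} only gives $\ell(g_j)^2K(g_j)<1$ for large $j$, which is boundedness rather than decay. So I would rerun its proof with the threshold $1$ replaced by a parameter $\theta\in(0,1]$.

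Write $K_j=\|\Rm_{g_j}\|_\infty$ and let $A$ bound $a_2(g_j)+|a_3(g_j)|+|a_4(g_j)|$. Suppose, for contradiction, that along a subsequence $\ell(g_j)^2K_j\ge\theta^2$ for some fixed $\theta\in(0,1]$. Then $K_j>0$ and $K_j\ge\theta^2\ell(g_j)^{-2}\to\infty$. Rescale by $h_j=K_jg_j$, so that $\|\Rm_{h_j}\|_\infty=1$ and $\ell(h_j)=\ell(g_j)\sqrt{K_j}\ge\theta$. Rauch comparison gives $\conj(h_j)\ge\pi$, and \eqref{eq:klingenberg} then gives $\inj(h_j)\ge\min\{\pi,\theta/2\}=\theta/2$.

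Next apply Lemma~\ref{lem:local-morrey} with $i_0=\theta/2$, and Lemma~\ref{lem:heat} exactly as in the proof of Theorem~\ref{thm:curvature}. Together with the scaling \eqref{eq:scaling}, this yields
\[
1\le C(\theta)\bigl(a_2(g_j)K_j^{-1/2}+|a_3(g_j)|K_j^{-3/2}+|a_4(g_j)|K_j^{-5/2}\bigr)\le C(\theta)A\bigl(K_j^{-1/2}+K_j^{-3/2}+K_j^{-5/2}\bigr).
\]
The right side tends to $0$ as $K_j\to\infty$, which is a contradiction. Hence $\limsup_j\ell(g_j)^2K_j\le\theta^2$ for every $\theta>0$, and the claim follows.

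The only step needing care is the dependence of constants on $\theta$. Lemma~\ref{lem:local-morrey} is stated for arbitrary $i_0>0$, so $C(\theta/2)$ is finite. The constants in Lemma~\ref{lem:heat} are universal, and the bounds $Y,X,Z$ use only $|\Rm_h|\le1$. The flat case $K_j=0$ satisfies the claim trivially. I therefore expect no real obstacle beyond checking that the chain of inequalities in \eqref{eq:normalized-energy} goes through verbatim with $C_*$ replaced by a $\theta$-dependent constant.
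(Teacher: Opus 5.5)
Your proposal is correct and follows the paper's proof essentially verbatim. The paper likewise passes to a subsequence with $\ell(g_j)\sqrt{K_j}\ge c>0$ (your $\theta$), rescales to $h_j=K_jg_j$, and replaces $1/2$ in \eqref{eq:normalized-inj} by $\min\{\pi,c/2\}$; it then reruns \eqref{eq:normalized-energy} with a $c$-dependent constant, whose right-hand side tends to zero because $K_j\to\infty$.
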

\begin{proof}
If not, pass to a subsequence with
$\ell(g_j)\sqrt{K_j}\ge c>0$, where
$K_j=\|\Rm_{g_j}\|_\infty\to\infty$.
For $h_j=K_jg_j$, replace $1/2$ in
\eqref{eq:normalized-inj} by $\min\{\pi,c/2\}$. The same proof
gives \eqref{eq:normalized-energy} with a constant depending on $c$.
Its right hand side tends to zero, a contradiction.
\end{proof}

\section{Construction of parametrix}
\label{sec:first-return}

We construct a finite retarded parametrix below the conjugate
radius and control its remainder. These estimates will be used in
Theorem~\ref{thm:first-singularity} to prove non-smoothness at the first return.
Every constant in this section may depend on the fixed metric;
no uniform parametrix over the isospectral family is needed.

\subsection{The finite retarded parametrix}\label{subsec:parametrix}

Fix a smooth closed connected Riemannian three-manifold $(M,g)$
and $0<T<r_*<\conj(g)$. We use a finite Hadamard--Riesz
construction; see \cite[Chapter 2]{BGP2007} for the general
framework. The transport equations, local-isometry pushforward,
and remainder estimates needed here are derived below.

\subsubsection{Construction on the exponential pullback ball}

For $x\in M$, the map $\exp_x$ is a local diffeomorphism on
$D_x(r_*)=\{v\in T_xM:|v|<r_*\}$, hence the pullback metric
$\widetilde g_x=\exp_x^*g$ is smooth. We set
\begin{equation}\label{eq:normal-data}
 dV_{\widetilde g_x}=j_x(v)\,dv,
 \qquad U_x(v)=j_x(v)^{-1/2}>0.
\end{equation}
These are smooth functions on the disk bundle $D(r_*)\subset TM$.

Put
$r=|v|$, $b=\partial_r\log j$, and $\sigma=t^2-r^2$.
Let $\widetilde\Delta$ be the nonnegative Laplacian of the pullback
metric.  Suppressing the base point $x$ when convenient, define smooth coefficients by
\begin{align}
 v_0(v)&=-\frac{U(v)}{8\pi}
     \int_0^1\frac{\widetilde\Delta U}{U}(sv)\,ds,
                                                       \label{eq:v0}\\
 v_m(v)&=-\frac{U(v)}{4m}
     \int_0^1s^m\frac{\widetilde\Delta v_{m-1}}{U}(sv)\,ds,
       \qquad m\ge1.                                 \label{eq:vm}
\end{align}
For $t>0$, consider
\begin{equation}\label{eq:PN}
 P_N(t,v)=\frac{U(v)}{4\pi r}\delta(t-r)
                    +\sum_{m=0}^N v_m(v)(t^2-r^2)_+^m,
\end{equation}
where $z_+^0=\1_{[0,\infty)}(z)$. The first term is the spherical
distribution for $t>0$, extended by zero to negative time.

\begin{lemma}\label{lem:transport}
Fix $x\in M$ and write $r=|v|_{g_x}$. The distribution $P_N$
defined in \eqref{eq:PN} satisfies the Cauchy problem
\begin{equation}\label{eq:PN-error}
\begin{cases}
(\partial_t^2+\widetilde\Delta)P_N
   =(\widetilde\Delta v_N)(t^2-r^2)_+^N,
   & \text{in }(0,\infty)\times D_x(r_*),\\[1mm]
P_N|_{t=0}=0,\qquad
\partial_tP_N|_{t=0}=\delta_0.
\end{cases}
\end{equation}
Here $\widetilde\Delta$ acts in the spatial variable $v$, and
$\delta_0$ is the Dirac distribution at $v=0$, normalized by
\[
 \int_{D_x(r_*)}\delta_0(v)\eta(v)\,
          dV_{\widetilde g_x}(v)=\eta(0)
 \qquad
 \text{for every }\eta\in C_c^\infty(D_x(r_*)).
\]
More precisely, for every
\[
 \varphi\in C_c^\infty([0,\infty)\times D_x(r_*)),
\]
we have
\[
\begin{aligned}
 &\int_0^\infty\int_{D_x(r_*)}
 P_N(t,v)(\partial_t^2+\widetilde\Delta)\varphi(t,v)
 \,dV_{\widetilde g_x}(v)\,dt\\
 &\qquad=
 \int_0^\infty\int_{D_x(r_*)}
 (\widetilde\Delta v_N)(v)(t^2-r^2)_+^N
 \varphi(t,v)\,dV_{\widetilde g_x}(v)\,dt
 +\varphi(0,0).
\end{aligned}
\]
\end{lemma}
\begin{proof}
We first compute away from the cone vertex $(t,v)=(0,0)$.
For a smooth spatial function $a$ and a one-variable distribution
$F$, the radial formula for the nonnegative Laplacian gives
\begin{equation}\label{eq:radial-identity}
 \begin{aligned}
 (\partial_t^2+\widetilde\Delta)(aF(\sigma))
 ={}&4\sigma a F''(\sigma)\\
 &+(8a+2rba+4r\partial_ra)F'(\sigma)
       +(\widetilde\Delta a)F(\sigma).
 \end{aligned}
\end{equation}
Indeed, $\partial_t\sigma=2t$, $\nabla\sigma=-2r\partial_r$,
and $\widetilde\Delta\sigma=6+2rb$.
The distributional compositions in \eqref{eq:radial-identity}
are well defined away from the vertex, since $d\sigma\ne0$
there on the cone.

For $t>0$, denote the direct term by
\[
 D(t,v)=\frac{U(v)}{4\pi r}\delta(t-r)
       =\frac{U(v)}{2\pi}\delta(\sigma).
\]
The identities $\sigma\delta''=-2\delta'$ and
$2\partial_rU+bU=0$ cancel its $\delta'$ coefficient, so
\[
 (\partial_t^2+\widetilde\Delta)D
       =\frac{\widetilde\Delta U}{2\pi}\delta(\sigma).
\]
For the first tail, $F=\1_{[0,\infty)}$, the identity
$\sigma\delta'=-\delta$ yields
\[
 (\partial_t^2+\widetilde\Delta)(v_0\sigma_+^0)
 =4U\bigl(r\partial_r(v_0/U)+v_0/U\bigr)\delta(\sigma)
       +(\widetilde\Delta v_0)\sigma_+^0.
\]
Thus the delta term is canceled precisely when
\[
 r\partial_r(v_0/U)+v_0/U
             =-\frac{\widetilde\Delta U}{8\pi U}.
\]
The unique solution smooth at $v=0$ is \eqref{eq:v0}.

For $m\ge1$, \eqref{eq:radial-identity} gives
\[
 \begin{aligned}
 (\partial_t^2+\widetilde\Delta)(v_m\sigma_+^m)
 ={}&4mU\bigl(r\partial_r(v_m/U)+(m+1)v_m/U\bigr)
                                      \sigma_+^{m-1}\\
 &+(\widetilde\Delta v_m)\sigma_+^m.
 \end{aligned}
\]
For $m=1$, the second derivative of $\sigma_+$ contributes a
delta distribution, but its factor $\sigma$ makes that term zero.
Cancellation with the preceding residual is therefore equivalent to
\[
 r\partial_r(v_m/U)+(m+1)v_m/U
             =-\frac{\widetilde\Delta v_{m-1}}{4mU}.
\]
Solving along each radial segment gives \eqref{eq:vm}.
These ray integrals are smooth jointly in the base point and
$v$, including $v=0$. Successive cancellation leaves exactly
$(\widetilde\Delta v_N)\sigma_+^N$.

It remains to check the initial data at the vertex. For a smooth
spatial test function $\phi$, the direct term has pairing
\begin{equation}\label{eq:vertex}
 \langle D(t,\cdot),\phi\rangle
 =\frac{t}{4\pi}\int_{\mathbb S^2}
       \sqrt{j(t\omega)}\,\phi(t\omega)\,d\omega
 =t\phi(0)+O(t^3).
\end{equation}
Here $j(v)=1+O(|v|^2)$ in normal coordinates, and the linear
part of $\phi(t\omega)$ has zero spherical average. After
$v=tw$, the pairing of the $m$th tail is $t^{2m+3}$ times a
smooth function of $t\ge0$. Hence every tail has zero initial
displacement and velocity, whereas \eqref{eq:vertex} gives
initial velocity $\delta_0$ for the direct term. The causal
extension therefore has the additional source
$\delta(t)\delta_0$ and no derivative-of-delta source at $t=0$.
This proves the stated Cauchy and weak formulations.
\end{proof}

\subsubsection{Pushforward and the remainder}

Choose $\chi\in C_c^\infty([0,r_*))$ equal to one on $[0,T]$
and constant near zero. Multiply \eqref{eq:PN} by $\chi(r)$.
For $0\le t<T$, every term is supported in $r\le t$,
so derivatives of $\chi$ produce no error on this time interval.
For the $C^1$ estimate below, take $N=12$. Higher finite orders
will be used in the remark at the end of Section~\ref{sec:conclusion}.
The causal residual
\begin{equation}\label{eq:smooth-error}
 \chi(r)(\widetilde\Delta v_N)(x,v)\,
                 \1_{\{t\ge0\}}(t^2-r^2)_+^N
\end{equation}
is $C^{N-1}$ jointly in $(t,x,v)$, including the cone vertex.
Away from the vertex this follows from the regularity of
$z\mapsto z_+^N$; at the vertex, derivatives of order at most
$N-1$ of the positive-time expression vanish as $(t,v)\to(0,0)$.

Let
\[
 \begin{gathered}
 D(r_*)=\{(x,v)\in TM:|v|_{g_x}<r_*\},\\
 \Phi:D(r_*)\longrightarrow M\times M,
 \qquad \Phi(x,v)=(x,\exp_xv).
 \end{gathered}
\]
This is a local diffeomorphism. The support imposed by $\chi$
is compact in the disk bundle and is covered by finitely many
local inverse charts. Push the parametrix and its residual
forward fiberwise, using $dV_{\widetilde g_x}$ on the fiber.
The resulting kernels $K_N(t,x,y)$ and $E_N(t,x,y)$ are
specified, distributionally in $t$, by
\begin{equation}\label{eq:pushforward-Q}
 \begin{aligned}
 \int_M K_N(t,x,y)\varphi(y)\,dV_g(y)
  =\int_{D_x(r_*)}\chi(|v|)P_N(t,x,v)
              \varphi(\exp_xv)\,dV_{\widetilde g_x}(v)
 \end{aligned}
\end{equation}
and
\begin{equation}\label{eq:pushforward-E}
 \begin{aligned}
 &\int_M E_N(t,x,y)\varphi(y)\,dV_g(y)\\
 &\quad=\int_{D_x(r_*)}\chi(|v|)(\widetilde\Delta v_N)(x,v)
       \1_{\{t\ge0\}}(t^2-r^2)_+^N
              \varphi(\exp_xv)\,dV_{\widetilde g_x}(v),
 \end{aligned}
\end{equation}
for every $\varphi\in C^\infty(M)$.
The exponential map is a local isometry from its pullback metric,
so these pushforwards are sums over local inverse branches with
no further Jacobian factor relative to $dV_g(y)$. Compactly
supported integration by parts shows that pushforward commutes
with the Laplacian in the $y$ variable. Consequently
\begin{equation}\label{eq:pushed-data}
 \begin{cases}
 (\partial_t^2+\Delta_y)K_N(t,x,y)=E_N(t,x,y),&0<t<T,\\
 K_N(0,x,y)=0,\qquad \partial_tK_N(0,x,y)=\delta_x(y).
 \end{cases}
\end{equation}
The compactly supported local pushforward also preserves the
joint $C^{N-1}$ regularity of the residual.

Use the usual operator-kernel convention
\[
 Q_N(t)f(x)=\int_M K_N(t,x,y)f(y)\,dV_g(y),\qquad
 T_N(t)f(x)=\int_M E_N(t,x,y)f(y)\,dV_g(y).
\]
Equation \eqref{eq:pushed-data} then reads
$Q_N''+Q_N\Delta_g=T_N$ on smooth functions, with
$Q_N(0)=0$ and $Q_N'(0)=I$. The order of composition matters:
the pushed kernel has not been assumed to be symmetric.
Define the remainder operator and its kernel by
\[
 \mathcal R_N(t)=S_g(t)-Q_N(t),\qquad
 R_N(t,x,y)=H(t,x,y)-K_N(t,x,y).
\]

\begin{proposition}\label{prop:remainder}
For $0<t<T$, the kernel $R_N$ is jointly $C^1$ in $(t,x,y)$.
The operators $\mathcal R_N(t)$ and $\partial_t\mathcal R_N(t)$
are trace class and continuous in trace norm. Moreover,
\begin{equation}\label{eq:trace-remainder}
 \Tr\mathcal R_N(t)=\int_M R_N(t,x,x)\,dV_g(x)
\end{equation}
is a $C^1$ function of $t$.
\end{proposition}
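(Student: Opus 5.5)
Starting from the pushed Cauchy problem \eqref{eq:pushed-data}, the plan is a Duhamel representation of the remainder. Since $Q_N''+Q_N\Delta_g=T_N$ with $Q_N(0)=0$, $Q_N'(0)=I$, while $S_g''+S_g\Delta_g=0$ with the same initial data, the difference $\mathcal R_N=S_g-Q_N$ satisfies $\mathcal R_N''+\mathcal R_N\Delta_g=-T_N$ with zero Cauchy data. Because composition is on the right, I would use Duhamel in the form
\[
 \mathcal R_N(t)=-\int_0^t T_N(s)\,S_g(t-s)\,ds .
\]
I would check this by differentiating twice in $t$ on smooth functions, using that $S_g$ commutes with $\Delta_g$, and then invoke uniqueness for the operator-valued ODE on $C^\infty$ functions. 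Uniqueness follows by applying both sides to an eigenfunction $\phi_j$, which reduces the problem to scalar ODEs. In kernel form this reads
\[
 R_N(t,x,y)=-\int_0^t\int_M E_N(s,x,z)H(t-s,z,y)\,dV_g(z)\,ds .
\]
Using the symmetry of $H$, the inner integral is $(S_g(t-s)E_N(s,x,\cdot))(y)$.

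The next step is regularity. Since $N=12$, the kernel $E_N$ is jointly $C^{11}$ in $(s,x,z)$, and it vanishes to high order at $s=0$. For each fixed $(s,x)$, the function $z\mapsto E_N(s,x,z)$ therefore lies in $H^{k}(M)$ for $k\le 11$, with norms continuous in $(s,x)$, and its $x$-derivatives up to order about $9$ behave the same way. The operators $S_g(\tau)$ and $\partial_\tau S_g(\tau)=\cos(\tau\sqrt{\Delta_g})$ are bounded $H^k\to H^{k+1}$ and $H^k\to H^k$ respectively, uniformly for $\tau\in[0,T]$. Hence $(s,t,x)\mapsto S_g(t-s)E_N(s,x,\cdot)$ is continuous into $H^{10}(M)\subset C^{8}(M)$ by Sobolev embedding in dimension three. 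Differentiating under the $s$-integral, the boundary term is $S_g(0)E_N=0$. This gives joint $C^1$ regularity, in fact $C^k$ for larger $k$, of $R_N$ in $(t,x,y)$.

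For trace class, I would write $\mathcal R_N(t)=-\int_0^tT_N(s)S_g(t-s)\,ds$. Each $T_N(s)$ has a $C^{11}$ kernel on a closed three-manifold, so $T_N(s)=\bigl(T_N(s)(1+\Delta_g)^{2}\bigr)(1+\Delta_g)^{-2}$. The first factor is bounded on $L^2$, because derivatives fall on the $C^{11}$ kernel in $z$. The second factor is trace class by Weyl's law, since $\sum(1+\lambda_j)^{-2}<\infty$ in dimension three. So $T_N(s)$ is trace class with trace norm continuous in $s$. Composing with the uniformly bounded $S_g(t-s)$ and integrating in trace norm shows that $\mathcal R_N(t)$ is trace class and continuous in trace norm. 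The same holds for
\[
 \partial_t\mathcal R_N(t)=-\int_0^tT_N(s)\cos((t-s)\sqrt{\Delta_g})\,ds,
\]
where the boundary term again vanishes. It follows that $\Tr\mathcal R_N$ is $C^1$, with derivative $\Tr\partial_t\mathcal R_N$.

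The last step, and the one I expect to be the main obstacle, is identifying the trace with the diagonal integral \eqref{eq:trace-remainder}. For a trace-class operator with a continuous kernel this identity can fail in general, so I would use the factorization instead. Write $\mathcal R_N(t)=A(t)(1+\Delta_g)^{-2}$, where $A$ is bounded, and note that the kernel of $(1+\Delta_g)^{-1}$ lies in $L^2(M\times M)$ in dimension three. This presents $\mathcal R_N(t)$ as a product of two Hilbert--Schmidt operators, $\mathcal R_N(t)=BC$ with $B=A(1+\Delta_g)^{-1}$ and $C=(1+\Delta_g)^{-1}$. By the standard formula, the trace of $BC$ equals $\int\!\!\int b(x,z)c(z,x)$, and Fubini together with the continuity of $R_N$ converts this into $\int_M R_N(t,x,x)\,dV_g$. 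Alternatively, I would expand $\Tr\mathcal R_N=\sum_j\langle\mathcal R_N\phi_j,\phi_j\rangle$ and use that the $C^8$ kernel has a uniformly convergent eigenfunction expansion.

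A delicate point is that $Q_N$ itself is singular, so the argument must be run at the level of $\mathcal R_N$ only, and the Duhamel identity must be justified distributionally. To do this I would pair against test functions in $(t,x,y)$ and use the weak formulation of Lemma~\ref{lem:transport}, pushed forward.
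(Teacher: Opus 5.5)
Your proof is correct and follows the paper's skeleton: the same Duhamel formula $\mathcal R_N(t)=-\int_0^tT_N(s)S_g(t-s)\,ds$, Sobolev regularity, and a factor $(1+\Delta_g)^{-2}$ controlled by Weyl's law. Where it differs is in how the trace-class and diagonal-trace steps are organized.

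The paper first proves $R_N\in C^1_tH^8(M\times M)$ on the product. It then factors the remainder itself as $\mathcal R_N(t)=B_N(t)(1+\Delta_g)^{-2}$, where $B_N(t)$ has kernel $(1+\Delta_y)^2R_N$. So $C^1$ dependence in trace norm comes directly from $C^1$ dependence of $B_N$ in Hilbert--Schmidt norm, with no further work on the propagators. The diagonal formula is then obtained by heat smoothing in both variables: the operators converge in trace norm and the kernels converge uniformly.

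You instead make the source $T_N(s)$ trace class and integrate Duhamel in trace norm. This gives an explicit formula for $\partial_t\mathcal R_N$. However, $\cos(\tau\sqrt{\Delta_g})$ and the difference quotients of $S_g$ converge only strongly. You are therefore implicitly using that a uniformly bounded, strongly convergent family composed with a fixed trace-class operator converges in trace norm. That is standard, but it should be stated.

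In your Hilbert--Schmidt route to the diagonal formula, ``Fubini plus continuity of $R_N$'' is not quite enough. The identity $R_N(t,x,y)=\int b(x,z)c(z,y)\,dz$ holds only almost everywhere, so it says nothing on the null set $\{x=y\}$. You also need the right side to be continuous in $y$ for a.e. $x$. This holds because $y\mapsto(1+\Delta_g)^{-1}\delta_y$ is continuous into $L^2$ in dimension three.

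Your eigenfunction-expansion alternative works as stated. Since $(1+\Delta_x)^2(1+\Delta_y)^2R_N\in L^2$ and $\|\phi_j\|_\infty\lesssim(1+\lambda_j)^{1/2}$, the double expansion converges absolutely and uniformly.

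Your remark that the diagonal identity can fail for trace-class operators with continuous kernels is too pessimistic. On a compact manifold it always holds (Duflo; Brislawn), and the paper's heat-smoothing step is essentially a proof of that fact in this setting.
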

\begin{proof}
The exact wave kernel satisfies the wave equation in $y$.
Subtracting \eqref{eq:pushed-data} gives
\[
 \begin{cases}
 (\partial_t^2+\Delta_y)R_N(t,x,y)=-E_N(t,x,y),&0<t<T,\\
 R_N(0,x,y)=0,\qquad \partial_tR_N(0,x,y)=0.
 \end{cases}
\]
Uniqueness for the distributional Cauchy problem and Duhamel's
formula therefore yield
\begin{equation}\label{eq:RN-kernel}
 R_N(t,x,y)=-\int_0^t\int_M E_N(s,x,z)H(t-s,z,y)
                                      \,dV_g(z)\,ds.
\end{equation}
Equivalently, $\mathcal R_N(t)=-\int_0^t T_N(s)S_g(t-s)\,ds$.

Work in $H^8(M\times M)$, defined using
$1+\Delta_x+\Delta_y$. The propagator acting in the $y$
variable and its first time derivative commute with this
operator and are bounded on a fixed time interval. Since
$E_N\in C_tH^8$, \eqref{eq:RN-kernel} implies
\[
                  R_N\in C_t^1H^8(M\times M).
\]
Sobolev embedding on the six-dimensional product gives joint
$C^1$ regularity. These statements hold up to $t=0$ on each
compact subinterval of $[0,T)$.

For the trace-class assertion, factor
\[
 \mathcal R_N(t)=B_N(t)(1+\Delta_g)^{-2},
\]
where $B_N(t)$ has kernel $(1+\Delta_y)^2R_N(t,x,y)$.
This kernel belongs to $L^2(M\times M)$, continuously with
one time derivative, so $B_N$ is $C^1$ with values in the
Hilbert--Schmidt operators. Weyl's law in dimension three
implies that $(1+\Delta_g)^{-2}$ is Hilbert--Schmidt.
Their product is therefore $C^1$ in trace norm.

Finally, apply heat smoothing in both variables. It approximates
the continuous kernel uniformly and the corresponding operator
in trace norm. The diagonal trace identity for the smoothed
kernels passes to the limit and proves \eqref{eq:trace-remainder}.
The same argument applies to the time derivative.
\end{proof}

\begin{remark}\label{rem:time-smearing}
All subsequent trace formulas can be justified without restricting
an unsmoothed distributional kernel to the diagonal. For
$\psi\in C_c^\infty((0,T))$, the operator
$\int\psi(t)S_g(t)\,dt$ is smoothing by repeated integration by
parts in its spectral expansion. In the direct part of $K_N$,
time integration replaces $\delta(t-r)/(4\pi r)$ by
$\psi(r)/(4\pi r)$, which vanishes near $r=0$. The integrated
tails are smooth for $r>0$; near $r=0$ their radial factors
are polynomials in $r^2$, since $\psi$ vanishes near zero.
The time-smoothed $Q_N$ thus also has a smooth kernel, and
its trace is its diagonal integral. Together with
Proposition~\ref{prop:remainder}, this proves the trace
identities below by duality against $\psi$.
\end{remark}

\section{The proof of the Main Theorem}\label{sec:conclusion}
We first establish the first-return singularity using the preceding
parametrix. We then construct a common interval of regularity for the
spectral trace and combine the two conclusions with
Theorem~\ref{thm:curvature}.

\begin{theorem}\label{thm:first-singularity}
If $\ell(g)<\conj(g)$, the distribution $\mathcal S_g$ is not
represented by a $C^1$ function on any neighborhood of $\ell(g)$.
In particular, it is not smooth there.
\end{theorem}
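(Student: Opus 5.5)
Write $\ell=\ell(g)$ and fix $T$ and $r_*$ with $\ell<T<r_*<\conj(g)$. Suppose, for contradiction, that on a neighbourhood $(\ell-\eta,\ell+\eta)$ the distribution $\mathcal S_g$ equals a $C^1$ function. We take the parametrix of Section~\ref{sec:first-return} with $N=12$ and split
\[
\mathcal S_g=\Tr Q_N+\Tr\mathcal R_N .
\]
Proposition~\ref{prop:remainder} gives that $\Tr\mathcal R_N$ is $C^1$ on $(0,T)$. Hence $\Tr Q_N$ would also be $C^1$ near $\ell$, and the remaining work is to show that it is not.

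By Remark~\ref{rem:time-smearing} we compute $\Tr Q_N$ against test functions $\psi$ supported near $\ell$. The pushforward \eqref{eq:pushforward-Q} restricted to the diagonal collects the return vectors $v\in D_x(r_*)$ with $\exp_xv=x$. Since $r_*<\conj(g)$, the map $\Phi$ is a local diffeomorphism, and the set $\mathcal Z=\{(x,v):\exp_xv=x,\ 0<|v|<r_*\}$ is exactly the set of geodesic loops. Its smallest length is $\ge\ell$ only after one more observation: a shortest geodesic loop need not be a closed geodesic. So I will instead use the function $F(x,v)=d(\exp_xv,x)$ near the shortest closed geodesic. Pairing the direct term with $\psi$ gives
\[
\int_M\sum\frac{U(x,v)\,\psi(|v|)}{4\pi|v|}\,(\ldots),
\]
that is, a positive measure $\mu$ on $(0,T)$ built from the transversal geometry of the return set. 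More concretely, $\mathcal S_g$ tested against $\psi$ equals
\[
\int\psi\,d\mu+\int\psi(t)\,\tau(t)\,dt+\text{($C^1$ terms)},
\]
where the tail $\tau(t)=\sum_m\int v_m\,(t^2-r^2)_+^m$ is a locally bounded function supported on $t\ge\ell_{\min}$, and $\ell_{\min}$ is the shortest loop length. Here $U>0$, so $\mu\ge0$.

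The key steps are as follows.
\begin{enumerate}
\item \textbf{Onset of $\mu$.} Show that $\mu$ vanishes on $(0,\ell_0)$ and has onset at some $\ell_0\le\ell$, with mass $\mu([\ell_0,\ell_0+u])\ge cu^{3/2}$. For the lower bound, work near a point $x$ of a shortest closed geodesic $\gamma$ with return vector $v_0=\ell\dot\gamma$. In the three-dimensional fibre, a neighbourhood of $v_0$ maps diffeomorphically onto a neighbourhood of $x$. Points $(y,w)$ with $\exp_yw=y$ and $|w|\le\ell+u$ are obtained by smearing: for the time-smoothed kernel, the diagonal trace near $v_0$ is $\int\psi(|w|)\,\delta(\exp_yw=y)$. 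Pushing $\Phi$ forward, this is the co-area density of $|w|$ on the 3-dimensional manifold $\mathcal Z$ near $(x,v_0)$, which has dimension $6-3=3$ because $\Phi$ is a submersion onto $M\times M$. The length function restricted to $\mathcal Z$ is smooth, and $\gamma$ (a one-dimensional family) lies in its minimum set at value $\ell$, if $\ell$ is the local minimum there. Its Hessian in the two transverse directions is bounded, giving $\operatorname{vol}\{\text{length}\le\ell+u\}\ge c\,u\cdot u^{1/2}$ after accounting for the one-parameter family; I expect the exact exponent bookkeeping to give at least $u^{3/2}$. A Taylor bound of the form $f\le\ell+C|\xi|^2$ gives volume $\gtrsim u^{2/2}=u$ in two transverse dimensions, times the length of $\gamma$. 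So the bound is at least $cu$, and in fact $u^{3/2}$ is weaker than this, which is fine.
\item \textbf{Tail bound.} Show the tail is small relative to the mass. Using $(t^2-r^2)_+^m$ and the coefficients $v_m$, we get $|\int_{\ell_0}^{\ell_0+u}\tau|\le Cu\cdot\mu([\ell_0,\ell_0+u])$, because the tail is a time integral of the same return density.
\item \textbf{Contradiction.} If $\Tr Q_N=G\in C^1$ near $\ell_0$, then since $\mu$ and $\tau$ vanish before $\ell_0$, take $\psi\to\mathbf 1_{[\ell_0,\ell_0+u]}$ after subtracting the value $G(\ell_0)$ and the linear Taylor term of the $C^1$ part. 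The smooth part contributes $O(u^2)$, or one uses antiderivatives to obtain an $O(u^2)$ onset. This yields $cu^{3/2}(1-Cu)\le Cu^2$, which fails as $u\to0$.
\end{enumerate}

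To obtain the onset exactly at $\ell$, I also need the smallest length in $\mathcal Z$ to be the relevant one. If geodesic loops shorter than $\ell$ exist, I instead apply the argument at $\ell_0=\min$ loop length, but the theorem is stated at $\ell$. So I will show that the first point of $\operatorname{supp}\mu$ is at most $\ell$ and that the onset analysis applies at whichever onset occurs in a neighbourhood. More carefully, I will run the argument at the infimum of the set of loop lengths in $[\ell-\eta,\ell]$.

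The main obstacle is step 1: obtaining a quantitative lower bound on the mass of $\mu$ near the onset without any nondegeneracy assumption. It also requires ensuring that the onset point lies in the given neighbourhood of $\ell$, that is, relating minimal loops to $\ell$ via a first-variation argument at the minimum of $|v|$ on $\mathcal Z$. A minimiser there has matching endpoint velocities, so it is a closed geodesic, which forces $\ell_0=\ell$.
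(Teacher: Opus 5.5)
Your proposal is correct and follows the paper's proof. The shared steps are:
\begin{itemize}
\item the positive direct-front measure $\mu$, coming from $U/(4\pi r)>0$;
\item a lower bound for $\mu([\ell,\ell+u])$ near the shortest closed geodesic that uses only an upper Hessian bound on the return-length function, so no nondegeneracy is needed;
\item the tail estimate $|\int_\ell^{\ell+u}B\,dt|\le Cu\,\mu([\ell,\ell+u])$;
\item the $O(u^2)$ onset forced by $C^1$ regularity once the smooth zero-return term $A_N$ and $\Tr\mathcal R_N$ are subtracted. Your $\tau$ should not contain the $v=0$ contribution, since that term is not supported in $t\ge\ell$.
\end{itemize}

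The differences are minor. You exclude geodesic loops shorter than $\ell$ by the first variation at a minimizer of $|v|$ on the return manifold; the paper instead uses Klingenberg's identity $\inj(g)=\min\{\conj(g),\ell/2\}$ together with a midpoint-uniqueness argument. Your tube around $\gamma$ also gives the stronger bound $\mu([\ell,\ell+u])\ge cu$, in place of the paper's ball bound $cu^{3/2}$.
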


\begin{proof}
Write $\ell=\ell(g)$ and choose
\[
             \ell<T_1<T<r_*<\conj(g).
\]
Throughout the proof, $N=12$, and $K_N$, $Q_N$, and
$\mathcal R_N$ are the parametrix kernel, its associated operator,
and the remainder from Section~\ref{subsec:parametrix}.

We first verify that every nonzero return vector has length at
least $\ell$. By \eqref{eq:klingenberg} and the hypothesis
$\ell<\conj(g)$, we have $\inj(g)=\ell/2$. Suppose that
$\exp_xv=x$ with $0<L:=|v|<\ell$, and let
$\gamma:[0,L]\to M$ be the associated unit-speed geodesic.
The two curves
\[
 t\longmapsto\gamma(t),\qquad
 t\longmapsto\gamma(L-t),\qquad 0\le t\le L/2,
\]
are minimizing geodesics from $x$ to $\gamma(L/2)$, each of
length less than $\inj(g)$. Uniqueness of a minimizing geodesic
at this distance makes them identical. At $t=L/2$ this would
give $\dot\gamma(L/2)=-\dot\gamma(L/2)$, a contradiction.
Consequently,
\begin{equation}\label{eq:no-short-return}
       \exp_xv=x,\quad v\ne0\quad\Longrightarrow\quad |v|\ge\ell.
\end{equation}
This argument concerns all geodesic loops.

Consider the nonzero return set
\[
 \mathscr R=\{(x,v)\in D(r_*):v\ne0,\ \exp_xv=x\}.
\]
The differential of $v\mapsto\exp_xv$ is invertible on
$D(r_*)$. The implicit function theorem therefore makes
$\mathscr R$ a smooth manifold, with its projection to $M$ a
local diffeomorphism. In particular, its local branches have
the form $x\mapsto v_\alpha(x)$, where
\[
 \exp_xv_\alpha(x)=x,\qquad
 L_\alpha(x):=|v_\alpha(x)|>0.
\]
The subset
\[
 \mathscr R_{[\ell,T_1]}
   =\{(x,v)\in\mathscr R:\ell\le |v|\le T_1\}
\]
is compact. Choose finitely many branch charts covering this
set and nonnegative smooth weights $\rho_\alpha$, compactly
supported in these charts, whose sum is one on a neighborhood
of $\mathscr R_{[\ell,T_1]}$ in $\mathscr R$. We also write
$\rho_\alpha(x)$ for each weight in its branch coordinates,
and denote the corresponding open subset of $M$ by
$\Omega_\alpha$. These weights count each return vector once,
even if several charts cover it; distinct return vectors at the
same point are all retained.

Choose an open interval $J$ containing $\ell$ with
$\overline J\subset(0,T_1)$, and let
$\psi\in C_c^\infty(J)$, extended by zero outside $J$.
We compute the time-regularized trace before restricting a
kernel to the diagonal. The pushforward formula and
\eqref{eq:PN} give
\begin{equation}\label{eq:regularized-parametrix}
\begin{aligned}
 \int_0^T\psi(t)K_N(t,x,y)\,dt
  =\sum_{\substack{\exp_xv=y\\r=|v|<T}}
  \left[
     \frac{U_x(v)}{4\pi r}\psi(r)
     +\sum_{m=0}^N v_m(x,v)
        \int_r^T\psi(t)(t^2-r^2)^m\,dt
  \right].
\end{aligned}
\end{equation}
For the direct term, the summand at $v=0$ is defined to be
zero: $\psi(r)/r$ vanishes identically near $r=0$.
Terms with $r\ge T$ vanish by causal support, and the cutoff
$\chi$ equals one on the remaining support. The sum is finite
locally, since the exponential map is a local diffeomorphism
and only a compact subset of its disk bundle contributes.

The kernel in \eqref{eq:regularized-parametrix} is smooth.
Indeed, the direct term is smooth after time integration, and
for $r>0$ the integrated tails are smooth functions of $r$.
Near $r=0$, the support of $\psi$ stays away from zero, so
\[
       \int_r^T\psi(t)(t^2-r^2)^m\,dt
\]
is a polynomial in $r^2$. Thus the ordinary diagonal trace
of \eqref{eq:regularized-parametrix} is legitimate, as in
Remark~\ref{rem:time-smearing}.

Set
\begin{equation}\label{eq:first-return-amplitude}
 a_\alpha(x)=\rho_\alpha(x)
            \frac{U_x(v_\alpha(x))}{4\pi L_\alpha(x)}\ge0,
 \qquad
 \mu=\sum_\alpha(L_\alpha)_*(a_\alpha\,dV_g).
\end{equation}
The measure $\mu$ is finite and positive. Each measure in
the sum is defined on $\Omega_\alpha$. By
\eqref{eq:no-short-return}, $\mu$ is supported in
$[\ell,\infty)$. In particular, its restriction to $J$ has
no contribution to the left of $\ell$. The direct term in
the time-regularized trace is exactly
\begin{equation}\label{eq:positive-measure}
 \sum_\alpha\int_{\Omega_\alpha}
       a_\alpha(x)\psi(L_\alpha(x))\,dV_g(x)
                  =\int_J\psi(t)\,d\mu(t).
\end{equation}

We next prove a lower bound for its cumulative mass. Choose
a shortest  closed geodesic, a point $x_0$ on it,
and the corresponding return vector $v_*$ of length $\ell$.
At least one of the partition weights is positive at
$(x_0,v_*)$. Choose such a chart, denoted by $\alpha_0$.
After shrinking its base neighborhood, we have
$a_{\alpha_0}\ge a_*>0$. On this neighborhood,
\[
 L_{\alpha_0}(x)\ge\ell=L_{\alpha_0}(x_0),
 \qquad dL_{\alpha_0}(x_0)=0.
\]
A bound for the Hessian of the smooth function
$L_{\alpha_0}$ gives constants $C\ge1$ and $\rho>0$ such
that
\[
 0\le L_{\alpha_0}(x)-\ell
           \le C\,d_g(x,x_0)^2,
       \qquad x\in B_g(x_0,\rho).
\]
For sufficiently small $u>0$, the ball
$B_g(x_0,\sqrt{u/(2C)})$ is therefore contained in
$\{\ell\le L_{\alpha_0}<\ell+u\}$. The local volume
comparison for this one fixed smooth metric yields
\begin{equation}\label{eq:first-return-mass}
 \mu([\ell,\ell+u])
 \ge a_*\Vol_g\bigl(B_g(x_0,\sqrt{u/(2C)})\bigr)
 \ge c u^{3/2}.
\end{equation}
No lower bound for the Hessian, and hence no nondegeneracy
or clean-intersection hypothesis, is used here.

We must keep the tail terms in the trace. The return vector
$v=0$ contributes the smooth function
\begin{equation}\label{eq:zero-return-tail}
          A_N(t)=\sum_{m=0}^N t^{2m}
                         \int_M v_m(x,0)\,dV_g(x).
\end{equation}
For the nonzero return vectors define, for $t\in J$,
\begin{equation}\label{eq:nonzero-return-tail}
 B(t)=\sum_\alpha\int_{\Omega_\alpha}
   \rho_\alpha(x)\sum_{m=0}^N
      v_m(x,v_\alpha(x))
      (t^2-L_\alpha(x)^2)^m
      \1_{\{L_\alpha(x)\le t\}}\,dV_g(x).
\end{equation}
Values of the indicator at its boundary do not affect the
distribution represented by this function. All relevant
nonzero return vectors have length in $[\ell,T_1]$ and
are covered by the chosen partition. On the compact supports
of the weights, $U_x(v)/(4\pi|v|)$ is strictly positive,
and every coefficient $v_m(x,v)$ is bounded. Since $t$ stays
in the bounded interval $J$, there is a constant $C_B$ such
that
\begin{equation}\label{eq:causal-tail-bound}
 \begin{aligned}
 B(t)&=0 &&(t<\ell),\\
 |B(t)|&\le C_B\mu([\ell,t]) &&(t\ge\ell,\ t\in J).
 \end{aligned}
\end{equation}
In particular, $B$ is a locally bounded measurable
function. This bound allows either sign for the tail.

Taking the trace in \eqref{eq:regularized-parametrix} and
using Proposition~\ref{prop:remainder}, we obtain the
distributional identity on $J$
\begin{equation}\label{eq:first-return-decomposition}
       \mathcal S_g
          =\mu+A_N(t)+B(t)+\Tr\mathcal R_N(t).
\end{equation}
More explicitly, its pairing with $\psi$ is
\[
 \langle\mathcal S_g,\psi\rangle
   =\int_J\psi\,d\mu
     +\int_J\psi(t)
       \bigl(A_N(t)+B(t)+\Tr\mathcal R_N(t)\bigr)\,dt.
\]

Suppose, for contradiction, that $\mathcal S_g$ is represented
by a $C^1$ function near $\ell$. Shrink $J$ if necessary and
define
\[
       F(t)=\mathcal S_g(t)-A_N(t)-\Tr\mathcal R_N(t).
\]
Then $F\in C^1(J)$ and
\begin{equation}\label{eq:C1-density-identity}
                    F(t)\,dt=d\mu(t)+B(t)\,dt.
\end{equation}
Since $B$ is locally bounded, this identity implies that
$\mu$ has a locally integrable density on $J$; in particular,
it has no atoms there. By \eqref{eq:no-short-return} and
\eqref{eq:causal-tail-bound}, $F$ vanishes on
$J\cap(-\infty,\ell)$. Thus $F(\ell)=F'(\ell)=0$, and
the $C^1$ bound gives
\begin{equation}\label{eq:C1-onset}
       \left|\int_\ell^{\ell+u}F(t)\,dt\right|
                          \le C_Fu^2
\end{equation}
for all sufficiently small $u>0$.
On the other hand, \eqref{eq:C1-density-identity} and
\eqref{eq:causal-tail-bound} give
\begin{align*}
 \int_\ell^{\ell+u}F(t)\,dt
  &=\mu([\ell,\ell+u])+\int_\ell^{\ell+u}B(t)\,dt\\
  &\ge\mu([\ell,\ell+u])
           -C_B\int_\ell^{\ell+u}\mu([\ell,t])\,dt\\
  &\ge(1-C_Bu)\mu([\ell,\ell+u])
   \ge\frac{c}{2}u^{3/2}
\end{align*}
after decreasing $u$ once more. This contradicts
\eqref{eq:C1-onset} as $u\to 0$ and proves the theorem.
\end{proof}

\begin{lemma}\label{lem:time-gap}
For each spectrum $\Lambda$ realized by a smooth closed connected
three-manifold, there is $\delta_\Lambda>0$ such that the common sine trace
of all realizations of $\Lambda$ is represented by a $C^1$ function on
$(0,\delta_\Lambda)$.
\end{lemma}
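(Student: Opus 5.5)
Fix one realization $(M_0,g_0)$ of $\Lambda$. Since $\mathcal S_g$ depends only on the spectrum, every realization has $\mathcal S_g=\mathcal S_\Lambda=\mathcal S_{g_0}$, so it suffices to show that $\mathcal S_{g_0}$ is $C^1$ on some interval $(0,\delta_\Lambda)$. Set $\delta_\Lambda=\inj(g_0)>0$, which is positive because $M_0$ is compact; no estimate is needed for any other realization. Given any $\delta<\delta_\Lambda$, choose $\delta<T<r_*<\inj(g_0)\le\conj(g_0)$ and build the finite retarded parametrix $K_N$ of Section~\ref{subsec:parametrix} for $g_0$ with $N=12$.

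By Proposition~\ref{prop:remainder}, $\Tr\mathcal R_N(t)$ is $C^1$ on $(0,T)$, so it remains to compute the time-regularized trace of $Q_N$ on $(0,\delta)$. For $\psi\in C_c^\infty((0,\delta))$ we use \eqref{eq:regularized-parametrix} on the diagonal $y=x$. Because $r_*<\inj(g_0)$, the map $\exp_x$ is injective on $D_x(r_*)$, so the only return vector with $|v|<T$ is $v=0$. The direct summand at $v=0$ is zero by convention, since $\psi(r)/r$ vanishes near $r=0$. The tail summands at $v=0$ give $\int\psi(t)A_N(t)\,dt$, with $A_N$ the even polynomial \eqref{eq:zero-return-tail}. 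Hence on $(0,\delta)$ we have the distributional identity
\[
 \mathcal S_{g_0}=A_N(t)+\Tr\mathcal R_N(t),
\]
which is the analogue of \eqref{eq:first-return-decomposition} with $\mu=0$ and $B=0$. The right-hand side is $C^1$, and since $\delta<\delta_\Lambda$ was arbitrary, the lemma follows.

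The main point to check is the passage from the time-smeared kernel to the diagonal trace. We do this as in Remark~\ref{rem:time-smearing}. The smeared $Q_N$ kernel is smooth near the diagonal, because its nonzero-vector branches are absent there. Off the diagonal it is a smooth function of $d(x,y)<r_*$, and its tails are polynomials in $r^2$ near $r=0$. So its trace equals its diagonal integral. The cutoff $\chi$ introduces no error on $t<T$, since every term is supported in $r\le t$.

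A minor issue is that Proposition~\ref{prop:remainder} is stated on $(0,T)$ while we need $T$ slightly larger than $\delta$. This is handled by the strict choice $\delta<T$. If $\delta_\Lambda=\inj(g_0)$ is not attained, we simply work with each $\delta<\delta_\Lambda$ separately and take the union of the intervals.
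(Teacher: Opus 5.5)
Your proposal is correct and follows the paper's proof: you fix one realization, build the $N=12$ parametrix below $\inj(g_0)$ so that only $v=0$ returns, obtain $\mathcal S_{g_0}=A_N+\Tr\mathcal R_N$ as distributions on $(0,T)$, and transfer this to every realization by spectral invariance. The only difference is cosmetic: you take $\delta_\Lambda=\inj(g_0)$ and patch the intervals $(0,\delta)$ together, which is valid by uniqueness of continuous representatives, whereas the paper simply sets $\delta_\Lambda=T$ for one fixed $T<\inj(g_0)$.
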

\begin{proof}
Fix one realization $(M_0,g_0)$ of $\Lambda$ and choose
\[
 0<T<r_*<\inj(g_0)\le\conj(g_0).
\]
Apply the construction of Section~\ref{sec:first-return} to this metric,
with $N=12$. Since $\exp_x$ is injective on $D_x(r_*)$, the only
return vector in this ball is $v=0$. For a test function
$\psi\in C_c^\infty((0,T))$, the time-integrated direct term vanishes
near $v=0$, because $\psi(r)/r=0$ for all sufficiently small $r$.
The diagonal contribution of the tails is therefore exactly
\[
 A_N(t)=\sum_{m=0}^N t^{2m}
                    \int_{M_0}v_m(x,0)\,dV_{g_0}(x),\qquad 0<t<T.
\]
The time-regularized trace identity consequently gives
\[
 \mathcal S_{g_0}=A_N+\Tr\mathcal R_N
                 \quad\hbox{in }\mathcal D'((0,T)).
\]
Here $A_N$ is a polynomial and $\Tr\mathcal R_N$ is $C^1$ by
Proposition~\ref{prop:remainder}. Hence $\mathcal S_{g_0}$ is represented
by a $C^1$ function on $(0,T)$.

For every other realization $(M,g)$ of $\Lambda$,
\eqref{eq:spectral-trace-definition} gives
$\mathcal S_g=\mathcal S_{g_0}$ as distributions. Thus the same function
and the same interval work for every realization, and we may set
$\delta_\Lambda=T$. Only the fixed metric $g_0$ was used to construct
this interval.
\end{proof}

\begin{proof}[Proof of Theorem~\ref{thm:main}]
The heat coefficients $a_2,a_3,a_4$ are determined by $\Lambda$,
so the quantity $Q(g)$ in \eqref{eq:Q-def} has a common value
$Q_\Lambda$ on the isospectral class. Set
\[
 \varepsilon_\Lambda
    =\min\{\delta_\Lambda,Q_\Lambda^{-1/2}\}>0,
\]
where $0^{-1/2}=+\infty$ and $\delta_\Lambda$ is supplied by
Lemma~\ref{lem:time-gap}. Suppose that a realization $(M,g)$ of
$\Lambda$ satisfied $\ell(g)<\varepsilon_\Lambda$.
Theorem~\ref{thm:curvature} would give $\ell(g)<\conj(g)$, and
Theorem~\ref{thm:first-singularity} would then imply that
$\mathcal S_g$ is not $C^1$ on any neighborhood of $\ell(g)$.
But $0<\ell(g)<\delta_\Lambda$, so such a neighborhood lies
inside the common $C^1$ interval from Lemma~\ref{lem:time-gap}.
This contradiction proves $\ell(g)\ge\varepsilon_\Lambda$ for
every realization of $\Lambda$.

Anderson's compactness criterion, in the formulation recalled in
\cite[Theorem~3.5]{Perry2003}, now gives smooth compactness modulo
diffeomorphisms and finitely many diffeomorphism types in the
isospectral class.
\end{proof}

\providecommand{\bysame}{\leavevmode\hbox to3em{\hrulefill}\thinspace}
\providecommand{\MR}{\relax\ifhmode\unskip\space\fi MR }
\providecommand{\MRhref}[2]{%
  \href{http://www.ams.org/mathscinet-getitem?mr=#1}{#2}}
\providecommand{\href}[2]{#2}

\end{document}